\numberwithin{equation}{section}
\newtheorem{Theorem}{Theorem}[section]
\newtheorem{Lemma}[Theorem]{Lemma}
\newtheorem*{Conjecture}{Conjecture}
\theoremstyle{definition}
\theoremstyle{remark}
\newtheorem*{remark}{Remark}
\newcommand{\QQ}{\mathfrak{Q}}
\newcommand{\FF}{\mathfrak{F}}
\newcommand{\aaa}{\mathfrak{a}}
\newcommand{\lcm}{\mathrm{lcm}}
\newcommand{\DD}{\mathfrak{D}}
\title{Shifted distinct-part partition identities in arithmetic progressions}
\author[{\tiny Alwaise}]{Ethan Alwaise}
\address{Department of Mathematics and Computer Science,
Emory University, Atlanta, GA 30322}
\email{ealwais@emory.edu, dicks@emory.edu, jason.friedman@emory.edu, \newline
lianyan.gu@emory.edu, zharner@emory.edu, madeline.locus@emory.edu, iwagner@emory.edu, jwein22@emory.edu}
\address{Department of Mathematics, Harvard University, Cambridge, MA 02138}
\email{hannahlarson@college.harvard.edu}
\author[Dicks]{Robert Dicks}
\author[Friedman]{Jason Friedman}
\author[Gu]{Lianyan Gu}
\author[Harner]{Zach Harner}
\author[Larson]{Hannah Larson}
\author[Locus]{Madeline Locus}
\author[Wagner]{Ian Wagner}
\author[Weinstock]{Josh Weinstock}
\begin{document}

\maketitle

\begin{abstract}
The partition function $p(n)$, which counts the number of partitions of a positive integer $n$, is widely studied. Here, we study partition functions $p_S(n)$ that count partitions of $n$ into distinct parts satisfying certain congruence conditions. 
A shifted partition identity is an identity of the form $p_{S_1}(n-H) = p_{S_2}(n)$ for all $n$ in some arithmetic progression. 
Several identities of this type have been discovered, including two infinite families found by Alladi. In this paper, we use the theory of modular functions to determine the necessary and sufficient conditions for such an identity to exist. In addition, for two specific cases, we extend Alladi's theorem to other arithmetic progressions.
\end{abstract}

\section{Introduction}

A \textit{partition} of an integer $n$ is a non-increasing sequence of positive integers which sum to $n$. 
For example, the partitions of 4 are
\[4, \qquad 3 + 1,\qquad 2 + 2, \qquad 2 + 1 + 1, \qquad 1 + 1 + 1 + 1.\]
A generating function for the \textit{partition function} $p(n)$, which counts the number of partitions of $n$, is given by
\[\prod_{n=1}^\infty \frac{1}{1-q^n} = \sum_{n=0}^\infty p(n)q^n = 1 + q + 2q^2 + 3q^3 + 5q^4 + 7q^5+\ldots.\]
Using similar generating functions, one can obtain interesting identities such as
\[\sum_{n=0}^\infty p_{\text{distinct}}(n)q^n = \prod_{n=1}^\infty (1+q^n) = \prod_{n=1}^\infty\frac{1-q^{2n}}{1-q^n} = \prod_{n=1}^\infty \frac{1}{1-q^{2n-1}} = \sum_{n=0}^\infty p_{\text{odd}}(n)q^n,\]
which shows that the number of partitions of $n$ into distinct parts is equal to the number of partitions of $n$ into odd parts. 

Alladi discovers several identities similar to the classical one above. For example, let $Q_{5}(n)$ be the number of partitions of $n$ into distinct parts congruent to $\pm 1, \pm 5, \pm 7, \pm 9\pmod {24}$, and let $Q_{11}(n)$ be the number of partitions of $n$ into distinct parts congruent to $\pm 1,\pm 7,\pm 9,\pm 11 \pmod {24}$. Similarly, define $\QQ_7(n)$ to be the number of partitions of $n$ into distinct parts which are congruent to $\pm{1}, \pm{7}, \pm{9}, \pm{11} \pmod{30}$ and $\QQ_{13}(n)$ to be the number of partitions of $n$ into distinct parts which are congruent to $\pm{1}, \pm{9}, \pm{11}, \pm{13} \pmod{30}.$  The following table lists the partitions counted by $Q_5(32)$ and $Q_{11}(33)$.
\medskip
\begin{center}
\begin{tabular}{| c | c | c |}
\hline
\rule{0pt}{2.3ex}{Partitions for $Q_5(32)$} & {Partitions for $Q_{11}(33)$} \\[0.4ex]
\hline\hline
\rule{0pt}{2.3ex}19+7+5+1  & 25+7+1  \\ [0.4ex]
\rule{0pt}{2.3ex}17+9+5+1 & 23+9+1 \\[0.4ex]
\rule{0pt}{2.3ex}15+9+7+1 & 17+15+1 \\[0.4ex]
\rule{0pt}{2.3ex}31+1 &17+9+7 \\[0.4ex]
\rule{0pt}{2.3ex}25+7 & 15+11+7 \\[0.4ex]
\rule{0pt}{2.3ex}23+9 & 13+11+9 \\[0.4ex]
\rule{0pt}{2.3ex}17+15 & 33 \\[0.4ex]
\hline
\end{tabular}
\end{center}
\medskip
In particular, this example shows that $Q_5(32) = Q_{11}(33)$. In general, Alladi proves in \cite{A1} that $Q_5(n-1) = Q_{11}(n)$ and $\mathfrak{Q}_{7}(n-1) = \mathfrak{Q}_{13}(n)$ for any positive odd integer $n$. Given this result, it is natural to ask if there exists an arithmetic progression of even $n$ such that $Q_5(n-1)=Q_{11}(n)$ or $\mathfrak{Q}_{7}(n-1) = \mathfrak{Q}_{13}(n)$. Earlier this year, after studying numerical evidence, Ono made the following conjecture.

\begin{Conjecture}[Ono]
We have $Q_5(n-1) = Q_{11}(n)$ and $\QQ_7(n-1) =\QQ_{13}(n)$ for all $n \equiv 4 \pmod 6$. 
\end{Conjecture}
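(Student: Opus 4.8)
The plan is to recast both shifted identities as the vanishing of the Fourier coefficients of explicit eta quotients in a fixed arithmetic progression, and then to reduce this to a finite verification using the theory of modular forms. First I would write the relevant generating functions as infinite products: setting $q=e^{2\pi i\tau}$, we have $F_5(q):=\sum_{n\ge0}Q_5(n)q^n=\prod_r\prod_{n\equiv r\,(\mathrm{mod}\,24)}(1+q^n)$, where $r$ runs through $\pm1,\pm5,\pm7,\pm9$, and analogously for $F_{11}$, and for $\QQ_7,\QQ_{13}$ with modulus $30$. Using $1+q^n=(1-q^{2n})/(1-q^n)$ and the Dedekind eta function $\eta(\tau)=q^{1/24}\prod_{n\ge1}(1-q^n)$, each such product over residue classes is, after multiplication by a suitable rational power of $q$, a (generalized) eta quotient that is a weight-zero modular function on $\Gamma_0(N)$ with a character, where $N=576$ (resp. $900$) is the natural level.

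The identity $Q_5(n-1)=Q_{11}(n)$ for $n\equiv4\pmod6$ is equivalent to the vanishing, for every such $n$, of the coefficient $a_n$ of $q^n$ in $A(q):=\sum_n a_nq^n=F_{11}(q)-qF_5(q)$. To isolate this progression I would apply the sixth-root-of-unity filter: with $\zeta=e^{2\pi i/6}$,
\[\sum_{n\equiv4\,(\mathrm{mod}\,6)}a_nq^n=\frac{1}{6}\sum_{j=0}^{5}\zeta^{-4j}A(\zeta^jq).\]
Since $q\mapsto\zeta^jq$ corresponds to $\tau\mapsto\tau+j/6$, the filtered series $\Phi(\tau)$ is, up to an explicit power of $q$, a finite linear combination of translates of the eta quotients attached to $F_5$ and $F_{11}$; it therefore extends to a modular function on a congruence subgroup contained in $\Gamma_0(6N)$. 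Proving Ono's conjecture then amounts to showing $\Phi\equiv0$, together with the analogous statement for $\QQ_7$ and $\QQ_{13}$.

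To verify $\Phi\equiv0$ I would clear its poles at the cusps: multiplying $\Phi$ by an explicitly chosen eta quotient produces a function $\Psi$ that is holomorphic at every cusp, hence a genuine holomorphic modular form of some weight $k$ on $\Gamma_0(M)$ with character. By Sturm's theorem such a form is identically zero as soon as its first $\big\lfloor\tfrac{k}{12}\,M\prod_{p\mid M}(1+\tfrac1p)\big\rfloor+1$ Fourier coefficients vanish. In this way the conjecture reduces to checking that finitely many coefficients of $\Psi$ vanish, a direct computation with the $q$-expansions.

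The main obstacle is the analytic bookkeeping that makes this reduction rigorous. Establishing that $\Phi$ is modular on a specific $\Gamma_0(M)$ requires tracking how the eta multiplier system behaves under the translations $\tau\mapsto\tau+j/6$ introduced by the filter, and then computing the order of vanishing of the relevant eta quotients at every cusp of $\Gamma_0(M)$ in order to select the auxiliary eta quotient that makes $\Psi$ holomorphic. Determining these cusp orders—and thereby pinning down the correct level $M$, weight $k$, and character so that Sturm's bound applies and stays small enough for the computation to be feasible—is the delicate part; once it is settled, the remaining verification is purely mechanical.
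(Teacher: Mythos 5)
Your proposal is correct and follows essentially the same route as the paper: sieve the difference of the two distinct-part generating functions onto the relevant progression via a root-of-unity filter (the paper's sieving operator $\mathcal{S}_{T,r}$), multiply by an explicit eta power ($\eta(24\tau)^7$, resp.\ $\eta(30\tau)^{16}$ in the paper) to clear the poles at the cusps, and reduce to a finite coefficient verification via Sturm's bound. The only cosmetic difference is that the paper first rescales $\tau \mapsto 4\tau$ (resp.\ $5\tau$) so the generating functions have integral $q$-expansions and then sieves modulo $24$ (resp.\ $30$) on $\Gamma_1(576)$ (resp.\ $\Gamma_1(900)$), which neatly disposes of the fractional-power and multiplier bookkeeping that you correctly identify as the delicate step in your mod-$6$ formulation.
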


Here, we confirm this conjecture.

\begin{Theorem} \label{true}
Ono's conjecture is true. Moreover, there are infinitely many $n$ and $m$ congruent to $0,2$ mod $6$ such that $Q_5(n-1) \neq Q_{11}(n)$ and $\mathfrak{Q}_7(m-1) \neq \mathfrak{Q}_{13}(m)$.
\end{Theorem}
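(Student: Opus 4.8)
The plan is to translate each shifted identity into the vanishing of an explicit modular function along a residue class modulo $6$, to verify the $n\equiv4\pmod6$ case by a finite computation, and then to prove nonvanishing on the classes $n\equiv0,2\pmod6$ by exhibiting a single failure and propagating it. To set up, writing $1+q^k=(1-q^{2k})/(1-q^k)$ and grouping the admissible parts by their residues modulo $24$ (for $Q_5,Q_{11}$) and modulo $30$ (for $\QQ_7,\QQ_{13}$), each series $\sum_n Q_5(n)q^n$, $\sum_n Q_{11}(n)q^n$, $\sum_n\QQ_7(n)q^n$, $\sum_n\QQ_{13}(n)q^n$ becomes a ratio of (generalized) Dedekind eta functions; after multiplication by a suitable power of $q$ each is a weight-$0$ modular function on $\Gamma_0(N)$ for an explicit level $N$ with $6\mid N$ (a multiple of $24$, resp. $30$). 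Because every factor $1+q^k$ is holomorphic and nonvanishing on the upper half plane, these series are holomorphic and nonvanishing on $\mathbb{H}$, so their only singularities occur at the cusps — a point I will use crucially below. I then set
\[
D(q):=q\sum_{n\ge0}Q_5(n)q^n-\sum_{n\ge0}Q_{11}(n)q^n=\sum_{n\ge0}\bigl(Q_5(n-1)-Q_{11}(n)\bigr)q^n,
\]
and analogously $\DD(q):=\sum_{n\ge0}(\QQ_7(n-1)-\QQ_{13}(n))q^n$. The shift by $H=1$ is exactly what aligns the two leading $q$-powers, so that $D$ and $\DD$ are weight-$0$ modular functions on $\Gamma_0(N)$, holomorphic on $\mathbb{H}$ with poles supported only at the cusps.

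Next I would isolate residue classes with the projection
\[
\pi_r f(\tau)=\frac16\sum_{j=0}^{5}e^{-\pi i rj/3}\,f\!\left(\tau+\frac{j}{6}\right),
\]
which keeps exactly the terms $q^n$ with $n\equiv r\pmod6$ and, since $6\mid N$, carries modular functions on $\Gamma_0(N)$ to modular functions on $\Gamma_0(N')$ for an explicit $N'$ with $N\mid N'\mid 36N$. Ono's conjecture is then precisely the assertion that $\pi_4 D\equiv0$ and $\pi_4\DD\equiv0$. To verify it I multiply each by an auxiliary eta product chosen to cancel all cusp poles, landing in a space of holomorphic modular forms of some positive weight $k$ on $\Gamma_0(N')$; by Sturm's theorem such a form vanishes identically once its first $\lfloor kM/12\rfloor$ coefficients vanish, where $M=[\mathrm{SL}_2(\mathbb{Z}):\Gamma_0(N')]$. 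Checking these finitely many coefficients confirms $Q_5(n-1)=Q_{11}(n)$ and $\QQ_7(n-1)=\QQ_{13}(n)$ for all $n\equiv4\pmod6$.

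For the failures I would instead study $\pi_0 D$, $\pi_2 D$ and their $\DD$-analogues, where the goal is nonvanishing. Here the holomorphy of $D$ on $\mathbb{H}$ pays off: $\pi_r D$ is a weight-$0$ modular function, holomorphic on $\mathbb{H}$, whose $q$-expansion is a power series supported on $n\equiv r\pmod6$. If that power series had only finitely many nonzero coefficients it would be a polynomial in $q$, hence bounded on the closed disk $|q|\le1$ and therefore holomorphic at every cusp; a holomorphic modular function on the compact curve $X_0(N')$ is constant, so $\pi_r D$ would be constant. Thus it suffices to compute coefficients until I locate one index $n_0>0$ with $n_0\equiv0$ or $2\pmod6$ and $Q_5(n_0-1)\neq Q_{11}(n_0)$ (and similarly an $m_0$ for $\DD$): this single witness forces $\pi_r D$ to be nonconstant, hence to have infinitely many nonzero coefficients, all lying in the class $n\equiv r\pmod6$, which yields the infinitely many $n$ and $m$ claimed.

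The main obstacle is the modular bookkeeping underlying the first two paragraphs: expressing the four restricted products exactly as eta quotients, determining the precise level $N$, weight $k$, and character, verifying that the shift $H=1$ really aligns the $q$-powers so that $D$ and $\DD$ descend to honest modular functions, and — most delicately — choosing the auxiliary eta products so that every cusp pole is cancelled and the Sturm bound genuinely applies. Once this cusp analysis is in place, both the finite verification for $n\equiv4\pmod6$ and the compactness/boundedness argument for $n\equiv0,2\pmod6$ go through as above.
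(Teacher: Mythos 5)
Your overall strategy is the paper's: express the distinct-part generating functions as quotients of generalized eta-functions, sieve onto residue classes, cancel the cusp poles with an auxiliary eta-power, and invoke Sturm's bound for the $n\equiv 4\pmod 6$ identity; then derive the infinitude of failures from the fact that a modular function with finitely many nonzero Fourier coefficients is forced to be trivial. One correction before the main point: ``multiplication by a suitable power of $q$'' does not produce a modular function, because the orders at infinity here are $\tfrac14$ and $-\tfrac34$ (resp.\ $\tfrac15$ and $-\tfrac45$); multiplying by $q^{3/4}$ destroys modularity. The correct normalization, as in the paper, is the rescaling $\tau\mapsto 4\tau$ (resp.\ $5\tau$), after which $F_5(\tau)=q\sum Q_5(n)q^{4n}$ and $F_{11}(\tau)=q^{-3}\sum Q_{11}(n)q^{4n}$ are modular on $\Gamma_1(576)$ (resp.\ level $900$), and your mod-$6$ projections become sieves on exponents mod $24$ (resp.\ mod $30$). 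You flagged this as bookkeeping, and it is fixable --- but it is not innocuous, because it creates the gap described next.

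The genuine gap is in your nonvanishing argument for the class $n\equiv 0\pmod 6$. In the honest modular normalization, the difference $D(\tau)=F_5(\tau)-F_{11}(\tau)$ has expansion supported on exponents $4n-3$, $n\ge 0$, and the $n=0$ term contributes $-Q_{11}(0)q^{-3}=-q^{-3}$; since $-3\equiv 21\pmod{24}$, this \emph{negative} power survives precisely the sieve corresponding to $n\equiv 0\pmod 6$ (similarly $-q^{-4}$ with $-4\equiv 26\pmod{30}$ survives the $m\equiv 0\pmod 6$ sieve for $\DD$). So under your finiteness hypothesis that sieved function is a \emph{Laurent} polynomial with a pole at $\infty$, not a polynomial: it is unbounded as $q\to 0$, the chain ``bounded on $|q|\le 1$ $\Rightarrow$ holomorphic at every cusp $\Rightarrow$ constant on the compact curve'' collapses at its first step, and compactness alone does not finish. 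The paper closes exactly this hole with its Lemma \ref{L}: a non-constant Laurent polynomial $P(q)$ can never be a modular function on $\Gamma_1(N)$, proved elementarily by letting $\tau=iy\to 0$ in the invariance relation to get $P(1)=P\bigl(e^{2\pi i b/d}\bigr)$ for infinitely many distinct values $e^{2\pi ib/d}$, which forces $P$ constant. With that lemma your argument actually simplifies: the forced $q^{-3}$ (resp.\ $q^{-4}$) term makes the sieved Laurent polynomial automatically non-constant, so no witness search is needed and the paper treats both classes $0,2\pmod 6$ in one stroke by sieving on $\{5,21\}\pmod{24}$ (resp.\ $\{6,26\}\pmod{30}$). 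For the class $n\equiv 2\pmod 6$ taken alone, where all surviving exponents are positive, your boundedness-plus-witness argument is sound as written (e.g.\ $Q_5(1)=1\neq 0=Q_{11}(2)$ supplies the witness), so the repair is needed only where the principal part intrudes.
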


It turns out that there are many more identities of this type. In \cite{A96}, Alladi finds that his original identity is just one example of two infinite families of shifted partition identities. Alladi proves his theorems using clever manipulations to arrive at beautiful but exotic $q$-series identities. It is natural to ask for a general theory that would prove all shifted distinct-part partition identities. Using the theory of modular functions, in any particular case, one can reduce the proof of these identities to a finite computation of $q$-series coefficients. Indeed, this is the method we employ to prove Theorem \ref{true}.

However, in order to gain a deeper understanding of these partition identities, we develop a framework that determines the necessary and sufficient conditions for such identities to exist. Fix a positive integer $\delta$. For a set $S$ of positive integers strictly between $0$ and $\frac{\delta}{2}$, we define $p_S(n)$ to be the number of partitions of $n$ into distinct parts congruent to elements of $\pm S$ modulo $\delta$.
Now suppose we are given two such sets $S_1$ and $S_2$. In addition, let $R = \{r_1, \ldots, r_m\}$ be a set of residues modulo $\delta$. Let $S_1', S_2',$ and $R'$ be as defined in \eqref{sprime} and \eqref{defR}, and let $X_{S,R}(\tfrac{a}{c};m)$ be the combination of special partition numbers defined in \eqref{defX}.
We say that $S_1$ and $S_2$ are \textit{suited for each other with respect to $R$} if  we have
\[X_{S_1',R'}\left(\tfrac{a}{c};m\right) = X_{S_2',R'}\left(\tfrac{a}{c};m\right),\]
for all $m$ in the finite set of rational numbers in \eqref{M} as $\frac{a}{c}$ runs over cusps of the congruence subgroup $\Gamma_{S_1,S_2}$ defined in \eqref{defG}.
 In addition, we require that $H(S_1,S_2)$, defined in \eqref{defH}, be an integer.
 
 \begin{remark}
 It turns out that for each $S$, the action of the modular group reveals a finite collection of related twisted partition functions $W_S^{(t)}\!\left(\frac{a}{c};n\right)$, which we define in Section 3.
 The conditions for two sets to be suited for each other is equivalent to
 certain combinations of their associated twisted partition numbers being equal.
 \end{remark}

Our main result is the following.

\begin{Theorem} \label{main}
Let $S_1, S_2$ and $R$ be as above, and let $H=H(S_1, S_2)$ be as defined in \eqref{defH}.
We have $p_{S_1}(n) = p_{S_2}(n-H)$ for all $n \equiv r_i \pmod \delta$ if and only if
 $S_1$ and $S_2$ are suited for each other with respect to $R$.
\end{Theorem}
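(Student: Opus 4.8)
The plan is to recast the combinatorial identity as the vanishing of a single weight-zero modular function on $\Gamma_{S_1,S_2}$, and then to show that this vanishing is detected entirely by the behavior at the cusps, which is exactly what the quantities $X_{S_i',R'}$ record. First I would introduce the generating functions $F_{S_i}(q)=\prod_{n\equiv \pm S_i \!\!\pmod\delta}(1+q^n)$, so that $p_{S_i}(n)$ is the coefficient of $q^n$. To isolate the progressions $n\equiv r_i\pmod\delta$, I would apply the standard roots-of-unity filter: writing $\zeta=e^{2\pi i/\delta}$, the restriction of a series $\sum c(n)q^n$ to a union of residue classes $R$ is a $\mathbb{C}$-linear combination of the twists $\sum c(n)\zeta^{jn}q^n$, i.e. of the dilates $q\mapsto \zeta^j q$. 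These dilations are realized by the translations $\tau\mapsto \tau+\tfrac{j}{\delta}$ in $SL_2(\mathbb{Z})$, and it is precisely this action that produces the modified data $S_i'$ and $R'$ and the finite family of twisted partition numbers $W_{S_i}^{(t)}\!\left(\tfrac{a}{c};n\right)$.

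Next I would establish modularity. Using $1+q^n=(1-q^{2n})/(1-q^n)$, each $F_{S_i}$ is a quotient of generalized Dedekind eta products taken over arithmetic progressions modulo $\delta$; such products are weight-zero and carry an explicit multiplier system and an explicit order of vanishing at every cusp. I would normalize by the rational power of $q$ dictated by $H=H(S_1,S_2)$ and form the difference
\[
D(\tau)=F_{S_1}(q)-q^{H}F_{S_2}(q)\qquad(\text{filtered to the classes in }R),
\]
and verify that $D$ transforms as a modular function on $\Gamma_{S_1,S_2}$. Here the integrality of $H(S_1,S_2)$ is exactly the compatibility condition that makes the two terms carry the same multiplier and lie in the same space; without it no modular interpretation exists, which already gives the necessity of that hypothesis. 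The cusp expansions of the two terms, read off from the eta transformation formulae, are governed by the partition-type sums $X_{S_i',R'}\!\left(\tfrac{a}{c};m\right)$, where the finite set of exponents $m$ in \eqref{M} indexes precisely the possible singular-or-constant terms at each cusp.

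With modularity in hand, both directions follow from the principle that a weight-zero holomorphic modular function vanishing at a cusp is identically zero. For sufficiency, suppose $S_1,S_2$ are suited with respect to $R$. Then the equalities $X_{S_1',R'}\!\left(\tfrac{a}{c};m\right)=X_{S_2',R'}\!\left(\tfrac{a}{c};m\right)$ for all $m\in M$ and all cusps $\tfrac{a}{c}$ of $\Gamma_{S_1,S_2}$ say exactly that the principal parts together with the relevant leading coefficients of the two terms cancel at every cusp, so $D$ extends holomorphically across all cusps and vanishes there; on the compact modular curve for $\Gamma_{S_1,S_2}$ such a function must be $\equiv 0$, and reading off coefficients in the classes $r_i$ yields $p_{S_1}(n)=p_{S_2}(n-H)$ for all $n\equiv r_i\pmod\delta$. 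Conversely, if the identity holds on these progressions then $D\equiv 0$ as a $q$-series, so its singular-or-constant part at each cusp vanishes and the $X$-equalities hold term by term; combined with the integrality of $H$ noted above, this is precisely the assertion that $S_1$ and $S_2$ are suited for each other with respect to $R$.

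I expect the main obstacle to be the second step: pinning down the transformation of the generalized eta products over residue classes under a general $\left(\begin{smallmatrix}a&b\\c&d\end{smallmatrix}\right)\in SL_2(\mathbb{Z})$ and converting it into a clean, finite description of the expansion at the corresponding cusp. The bookkeeping of the multiplier system, of the leading $q$-power (the order at the cusp), and of the way the roots-of-unity filter interacts with the cusp width is delicate, and it is this computation that must be organized so that the singular part becomes a genuine function of the finitely many twisted partition numbers $W_{S}^{(t)}\!\left(\tfrac{a}{c};n\right)$ assembled into $X_{S',R'}\!\left(\tfrac{a}{c};m\right)$. Once that dictionary is in place, the holomorphy-implies-vanishing argument is routine.
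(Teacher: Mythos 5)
Your proposal is correct and follows essentially the same route as the paper's proof: recast the shifted identity as the equality of the two sieved generalized eta-quotient generating functions (your difference $D(\tau)$ is the paper's $\mathcal{F}_{S_1',R'}(\tau)-\mathcal{F}_{S_2',R'}(\tau)$), invoke modularity on $\Gamma_{S_1,S_2}$, and use the principle that two modular functions with poles supported at the cusps agree if and only if their principal parts agree at every cusp, which by Lemma \ref{4.2} is exactly the suitedness condition $X_{S_1',R'}\left(\tfrac{a}{c};m\right)=X_{S_2',R'}\left(\tfrac{a}{c};m\right)$ for $m$ in the finite set \eqref{M}. The only (harmless) discrepancy is attributional: the primed data $S_i'$ and $R'$ arise in the paper from the dilation $\tau\mapsto v\tau$ with $v=\mathrm{Den}(\mathrm{ord}_{S_2})$ rather than from the roots-of-unity filter, which instead produces the twisted partition numbers $W_S^{(t)}$.
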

 

\begin{remark}
In \cite{G}, Garvan and Yesilyurt prove several shifted partition identities for partitions into not necessarily distinct parts satisfying certain congruence conditions. Although we consider only partitions into distinct parts, we believe similar results should hold for these other partition functions.
\end{remark}

This paper is organized as follows. In Section \ref{back} we review the relevant theory of modular functions and transformation properties of generalized eta-functions. In Section \ref{genfun}, we introduce generating functions for the partition numbers $p_{S}(n)$ and find explicit expressions for their Fourier expansions at other cusps. We use these formulas to prove Theorem \ref{main} in Section \ref{pf2}. Finally in Section \ref{pf1}, we illustrate another method for proving shifted partition identities in the proof of Theorem \ref{true}.

\section*{Acknowledgements}
The authors would like to thank Ken Ono for advising this project and Michael Mertens for many helpful conversations and suggestions. The authors also thank Emory University and the NSF for their support.

\section{Modular functions} \label{back}

To prove Theorems \ref{true} and \ref{main}, we study certain modular functions which are generating functions for the partition numbers $p_{S}(n)$. In Section \ref{bas} we review the theory of modular forms and modular functions and introduce an operator that sieves on Fourier coefficients. In Section \ref{trans}, we discuss generalized eta-products, which we use to build our generating functions in Section \ref{genfun}.

\subsection{Basics of modular forms and modular functions} \label{bas}
Recall that the \textit{principal congruence subgroup} of $\mathrm{SL}_2(\mathbb{Z})$ of level $N$ is defined by
\[\Gamma(N) := \left\{\left(\begin{matrix} a & b \\ c & d \end{matrix}\right) \in \mathrm{SL}_2(\mathbb{Z}) : \left(\begin{matrix} a & b \\ c & d \end{matrix}\right) \equiv \left(\begin{matrix} 1 & 0 \\ 0 & 1 \end{matrix}\right) \pmod N\right\}.\]
We are also interested in the congruence subgroups
\[\Gamma_1(N) := \left\{\left(\begin{matrix} a & b \\ c & d \end{matrix}\right) \in \mathrm{SL}_2(\mathbb{Z}) : \left(\begin{matrix} a & b \\ c & d \end{matrix}\right) \equiv \left(\begin{matrix} 1 & * \\ 0 & 1 \end{matrix}\right) \pmod N\right\}.\]
These groups act on the extended upper half plane $\mathbb{H}^*= \mathbb{H} \cup \mathbb{Q} \cup \{\infty\}$ by fractional linear transformations
\[\left(\begin{matrix} a & b \\ c & d \end{matrix}\right) \tau := \frac{a\tau+b}{c\tau+d}.\]
Equivalence classes of points in $\mathbb{Q} \cup \{\infty\}$ under the action of a subgroup $\Gamma \subseteq \mathrm{SL}_2(\mathbb{Z})$ are referred to as the \textit{cusps} of $\Gamma$.
Given any $\gamma = \left(\begin{smallmatrix} a&b\\c&d\end{smallmatrix} \right)\in \mathrm{GL}_2(\mathbb{Q})$ with $\det(\gamma) > 0$, the weight-$k$ \textit{slash operator} acts on functions on the upper half plane $\mathbb{H}$ by
\[f(\tau) \vert_k \gamma := (\det\gamma)^{k/2}(c\tau+d)^{-k}f\left(\frac{a\tau+b}{c\tau+d}\right).\]
A \textit{modular form} of integer weight $k$ for a congruence subgroup $\Gamma \subseteq \mathrm{SL}_2(\mathbb{Z})$ of level $N$ is a holomorphic function $f: \mathbb{H} \rightarrow \mathbb{C}$ which satisfies $f(\tau)|_k\gamma = f(\tau)$ for all $\gamma \in \Gamma$ and is holomorphic at the cusps, i.e.~has a Fourier expansion of the form
\[f(\tau)\vert_k \gamma_0 = \sum_{n \geq 0} a_{\gamma_0}(n) q_N^n,\]
where $q_N:=e^{2\pi i \tau/N},$ at each cusp.
A \textit{modular function} is a meromorphic function on $\mathbb{H}$ which is invariant under the weight-$0$ slash operator and meromorphic at the cusps, meaning it has finitely many negative powers of $q$ in its Fourier expansions at the cusps. We refer to this part of the Fourier expansion as the \textit{principal part}.

Since we are interested in partition identities on certain arithmetic progressions, we consider functions that arise from sieving on an arithmetic progression of Fourier coefficients. Suppose we are given a modular form $f(\tau) = \sum_n a_nq^n$ with integer powers of $q$ in its Fourier expansion at infinity and a residue $r$ mod $T$. We define the \textit{sieving operator} $\mathcal{S}_{T,r}$ by
\begin{equation} \label{defS}
f(\tau)\vert \mathcal{S}_{T,r}:= \sum_{n \equiv r \! \! \! \pmod T} a_nq^n =
\frac{1}{T}\sum_{t=0}^{T-1}\zeta_T^{T-tr}f\left(\tau + \frac{t}{T}\right),
\end{equation}
 where $\zeta_T := e^{2\pi i/T}$. The sieved form is again modular of the same weight, but possibly for a different group. The following lemma makes this precise.
 
 \begin{Lemma} \label{s}
 Let $f(\tau)$ be a modular form of weight $k$ (resp.~modular function) on $\Gamma_1(N)$ for some $N$ and let $r$ be any residue modulo $T$. Then $f(\tau)|\mathcal{S}_{T,r}$ is a modular form of the same weight (resp.~modular function) on $\Gamma_1(\mathrm{lcm}(T^2,N))$.
 \end{Lemma}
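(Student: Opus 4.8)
The plan is to check the two defining properties of modularity for $g:=f|\mathcal{S}_{T,r}$ directly from the additive expression on the right-hand side of \eqref{defS}. Writing $\beta_t:=\left(\begin{smallmatrix}1 & t/T\\ 0 & 1\end{smallmatrix}\right)\in\mathrm{SL}_2(\mathbb{Q})$, the key observation is that $f(\tau+\tfrac{t}{T})=f|_k\beta_t$, so that (using $\zeta_T^{T}=1$)
\[
g=\frac{1}{T}\sum_{t=0}^{T-1}\zeta_T^{-tr}\, f|_k\beta_t .
\]
Thus $g$ is a finite $\mathbb{C}$-linear combination of slashes of $f$ by elements of $\mathrm{GL}_2^{+}(\mathbb{Q})$. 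The whole argument then splits into two parts: showing that this combination is fixed by the weight-$k$ slash action of every $\gamma\in\Gamma_1(\lcm(T^2,N))$, and controlling its behavior at the cusps. I would treat an arbitrary $\gamma$ rather than reducing to generators, since the combination transforms uniformly.

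For the invariance, set $M:=\lcm(T^2,N)$ and fix $\gamma=\left(\begin{smallmatrix}a & b\\ c & d\end{smallmatrix}\right)\in\Gamma_1(M)$. I would first compute
\[
\beta_t\gamma=\begin{pmatrix} a+tc/T & b+td/T\\ c & d\end{pmatrix},
\]
and then factor this in the form $\beta_t\gamma=\gamma_t'\,\beta_{\sigma(t)}$ with $\gamma_t'\in\mathrm{SL}_2(\mathbb{Z})$ and a reindexing $t\mapsto\sigma(t)$ of $\{0,\dots,T-1\}$. The hypotheses on $M$ are exactly what make this factorization land back in a group fixing $f$: since $T^2\mid M\mid c$ we have $T\mid c$, so $tc/T\in\mathbb{Z}$ and the lower-left entry of $\gamma_t'$ equals $c\equiv 0\pmod N$; comparing the off-diagonal entries and using $a\equiv d\equiv 1\pmod T$ forces $\sigma(t)\equiv t\pmod T$, so $\sigma$ is the identity on residues and the twisting phases match, $\zeta_T^{-\sigma(t)r}=\zeta_T^{-tr}$. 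Granting that $\gamma_t'$ stabilizes $f$, one then gets $f|_k\beta_t\gamma=(f|_k\gamma_t')|_k\beta_{t}=f|_k\beta_t$ for each $t$, and summing yields $g|_k\gamma=g$, as required. The genuinely delicate point — and the step I expect to be the main obstacle — is verifying that $\gamma_t'$ actually lies in $\Gamma_1(N)$, i.e.\ that its diagonal entries $a+tc/T$ and $d-tc/T$ are $\equiv 1\pmod N$. The lower-left congruence is automatic, so for forms of trivial nebentypus (where invariance under $\Gamma_0(N)$ already suffices) the argument closes immediately; in general one must track the diagonal congruence, where the divisibilities $N\mid M$ and $T^2\mid M$ enter, together with the diamond-operator bookkeeping that records how the diagonal of $\gamma_t'$ reduces modulo $N$.

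Finally, for the cusp condition I would argue that each summand $f|_k\beta_t$ is holomorphic on $\mathbb{H}$ and, because $f$ is holomorphic (resp.\ meromorphic) at the cusps and $\beta_t\in\mathrm{GL}_2^{+}(\mathbb{Q})$, remains holomorphic (resp.\ meromorphic) at every cusp of $\Gamma_1(M)$; a finite linear combination inherits this. Since $f$ has integer powers of $q$ and $\beta_t$ only multiplies the $n$-th coefficient by $\zeta_T^{tn}$, the expansions stay in integral powers of $q$, and as a form on $\Gamma_1(M)$ these organize into Fourier series in $q_M=e^{2\pi i\tau/M}$ at each cusp, which is exactly the required expansion. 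Taking $k=0$ and replacing ``holomorphic at the cusps'' by ``meromorphic at the cusps'' throughout gives the modular-function case verbatim, completing the plan.
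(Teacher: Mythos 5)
Your factorization is exactly the paper's: since, as you observe, integrality of the upper-right entry of $\gamma_t'$ forces $\sigma(t)\equiv t\pmod{T}$, you may as well take $\sigma=\mathrm{id}$, and then $\gamma_t'=\beta_t\gamma\beta_t^{-1}=\left(\begin{smallmatrix} a+ct/T & (d-a)t/T-ct^2/T^2\\ c & d-ct/T\end{smallmatrix}\right)$ is precisely the matrix in the paper's computation, and your cusp paragraph matches the paper's closing sentence. The problem is the step you yourself flag as ``the main obstacle'' and never close: the diagonal congruence $a+ct/T\equiv 1\pmod{N}$. It does not involve $\sigma(t)$ at all, so no reindexing can rescue it, and it is false at the stated level: $\lcm(T^2,N)\mid c$ only makes $ct/T$ divisible by $\lcm(T^2,N)/T$, which need not be divisible by $N$. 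For instance, with $T=2$, $N=4$, $\gamma=\left(\begin{smallmatrix}1&0\\4&1\end{smallmatrix}\right)\in\Gamma_1(4)$ and $t=1$, one gets $\gamma_t'=\left(\begin{smallmatrix}3&-1\\4&-1\end{smallmatrix}\right)$, which lies in $\Gamma_0(4)$ but not $\Gamma_1(4)$. Moreover, the diamond twist this produces is genuinely nontrivial, so the gap is not mere bookkeeping you postponed: for $f=\theta(\tau)^2\in M_1(\Gamma_1(4))$ one has $f\vert\mathcal{S}_{2,0}=\theta(2\tau)^2$ (using $r_2(2m)=r_2(m)$), and $\theta(2\tau)^2$ does not lie in the one-dimensional space $M_1(\Gamma_1(4))=\mathbb{C}\,\theta(\tau)^2$. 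So under the stated hypotheses (level $\lcm(T^2,N)$, no assumption on nebentypus), the containment $\gamma_t'\in\Gamma_1(N)$ that your plan hinges on cannot be supplied by any argument.

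You should also know that the paper's own proof makes exactly the leap you balked at: it asserts that ``the fact that $T^2$ and $N$ divide $c$ guarantees that the matrix on the left is in $\Gamma_1(N)$,'' which, read literally, yields only $\Gamma_0(N)$-membership together with integrality of the upper-right entry --- so your instinct correctly isolated the weak point of the published argument as well. The two repairs are the ones you gesture at: either assume $f$ is invariant under all of $\Gamma_0(N)$ (trivial nebentypus), in which case the diagonal entries are irrelevant and your argument closes verbatim; or enlarge the group so that $NT\mid c$, e.g.\ work on $\Gamma_1(\lcm(T^2,NT))$ (the classical safe level is $NT^2$), after which $a+ct/T\equiv 1\pmod{N}$ does hold, $\gamma_t'\in\Gamma_1(N)$, and the rest of your computation --- identity reindexing, matching phases $\zeta_T^{-tr}$, and the finite-linear-combination argument for holomorphy (resp.\ meromorphy) at the cusps --- completes the proof along the same lines as the paper. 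Note that with this correction the groups appearing downstream (Lemma \ref{2.3}, the group in \eqref{defG}, and the Sturm bound computations in Section \ref{pf1}) must be enlarged correspondingly, unless one verifies that the particular generalized eta-quotients used there carry the extra $\Gamma_0$-type invariance.
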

 
 \begin{proof}
 Suppose we have some $\left(\begin{smallmatrix} a&b\\c&d\end{smallmatrix}\right) \in \Gamma_1(\mathrm{lcm}(T^2,N))$.
Each summand of $f(\tau)|\mathcal{S}_{T,r}$ satisfies
 \begin{align*}
 \left.f\left(\tau+\frac{t}{T}\right)\right|_k\left( \begin{matrix} a&b\\c&d\end{matrix}\right) &= \! \! \! \! \!\left. \begin{array}{c} \\[2ex] \end{array} f(\tau)\right|_k\left(\begin{matrix} 1& \frac{t}{T} \\ 0 & 1 \end{matrix}\right)
\left( \begin{matrix} a&b\\c&d\end{matrix}\right) 
\left(\begin{matrix} 1& \frac{-t}{T} \\ 0 & 1 \end{matrix}\right)
\left(\begin{matrix} 1& \frac{t}{T} \\ 0 & 1 \end{matrix}\right) \\
&= \! \! \! \! \!\left. \begin{array}{c} \\[2ex] \end{array} f(\tau)\right|_k \left(\begin{matrix} a+\frac{ct}{T} & (d-a)\frac{t}{T} - \frac{t^2c}{T^2} \\ c & d-\frac{ct}{T} \end{matrix}\right)\left(\begin{matrix} 1& \frac{t}{T} \\ 0 & 1 \end{matrix}\right) \\
&= \! \! \! \! \!\left. \begin{array}{c} \\[2ex] \end{array} f(\tau)\right|_k \left(\begin{matrix} 1& \frac{t}{T} \\ 0 & 1 \end{matrix}\right) = f\left(\tau+\frac{t}{T}\right),
 \end{align*}
 where the third equality holds because the fact that $T^2$ and $N$ divide $c$ guarantees that the matrix on the left is in $\Gamma_1(N)$. Therefore, $f(\tau)|\mathcal{S}_{T,r}$ is a linear combination of modular forms of weight $k$ on $\Gamma_1(\text{lcm}(T^2,N))$, and hence this function is a modular form of the same weight on the same group. If $f(\tau)$ is holomorphic (resp.~meromorphic), then each summand above is too, and therefore so is $f(\tau) \vert \mathcal{S}_{T,r}$.
 \end{proof}

\subsection{Generalized eta-products} \label{trans}
The \textit{Dedekind eta-function}, defined by
\[\eta(\tau) := q^{1/24} \prod (1 - q^n),\]
where $q:=e^{2\pi i \tau}$, is a modular form of weight $\frac{1}{2}$ on $\mathrm{SL}_2(\mathbb{Z})$ (with a particular multiplier system).
The modular functions we are interested in arise as quotients of products of \textit{generalized eta-functions} that we introduce now.
Let $$P_1(x) := \begin{cases} \{x\} - \frac{1}{2} &\text{if $x \not\in \mathbb{Z}$} \\ 0 & \text{if $x \in \mathbb{Z}$.}\end{cases}$$ 
and $P_2(x):= \{x\}^2-\{x\}+\frac{1}{6}$ be the first and second Bernoulli polynomials respectively evaluated at the fractional part of their arguments, $\{x\} := x - \lfloor x\rfloor$. Following \cite{Robins}, for fixed $\delta$ we define
\begin{equation} \label{defetas}
\eta_{g, h}^{(s)}(\tau) =\alpha_\delta(g,h)q^{\frac{1}{2}P_2\left(\frac{g}{\delta}\right)\tau}\prod_{\substack{m>0\\m\equiv g \! \! \! \pmod \delta}}(1 - \zeta_\delta^hq^{\frac{m}{\delta}}) \prod_{\substack{m> 0 \\ m \equiv -g \! \! \! \pmod \delta}}(1 - \zeta_\delta^{-h}q^{\frac{m}{\delta}}), 
\end{equation}
where $\zeta_\delta := e^{2\pi i /\delta}$ and
\begin{equation} \label{defalpha}
\alpha_\delta(g,h) := \begin{cases} (1 - \zeta_\delta^{-h})e^{\pi i P_1(\frac{h}{\delta})} &\text{if $g \equiv 0$ and $h \not\equiv 0 \!\! \pmod \delta$} \\ 1 & \text{otherwise.}\end{cases} 
\end{equation}
 \begin{remark}
We note that $\eta_{g, h}^{(s)}(\tau)$ is, up to sign, the \textit{Siegel function} $g_{(g/\delta,h/\delta)}(\tau)$ studied by Kubert and Lang in \cite{KL}.
\end{remark}
In addition, we define
\begin{equation}\label{defetas2}
\eta_{\delta, g}(\tau) := \eta_{g,0}^{(s)}(\delta \tau) = e^{\pi i P_2(\frac{g}{\delta})\delta\tau}\prod_{\substack{\ell > 0 \\ \ell \equiv g \! \! \! \pmod \delta}}(1-q^\ell)
\prod_{\substack{\ell > 0 \\ \ell \equiv -g \! \! \! \pmod \delta}}(1-q^\ell)
.\end{equation}

Note that $\eta_{\delta, 0}(\tau) = \eta(\delta\tau)^2$.
From these definitions, it is easy to see that for any integer $t$,
 \begin{equation} \label{td}
 \eta_{\delta,g}\left(\tau+\frac{t}{\delta}\right) = e^{\pi i P_2\left(\frac{g}{\delta}\right)t}\eta_{g,tg}^{(s)}(\delta \tau),
 \end{equation}
 which helps us to describe sieved eta-products.
In \cite{Robins}, Robins studies the transformation properties of the functions $\eta_{\delta,g}(\tau)$ under the action of $\mathrm{SL}_2(\mathbb{Z})$. That is, Robins provides an expression for $\eta_{\delta, g}(A\tau) = \eta_{g,0}^{(s)}(\delta A\tau)$ in terms of an $\eta_{g',h'}^{(s)}(\tau)$, depending on the matrix $A$. Given $A = \left(\begin{smallmatrix} a&b\\c&d\end{smallmatrix}\right)$, let $D=\text{gcd}(c, \delta)$ and choose integers $b_0$ and $d_0$ such that $Dd=\delta a d_0 - cb_0$. In Section 4 of \cite{Robins}, Robins shows that
\[\delta A \tau = A_0\left(\frac{D\tau - b_0}{\delta/D}\right) \qquad \text{where} \qquad A_0 = \left(\begin{matrix} \frac{\delta a}{D} & ab_0+bD \\ \frac{c}{D} & d_0 a \end{matrix}\right).\]

\begin{remark}
Here, we use slightly different notation than Robins. In particular, the roles of $b$ and $b_0$ (resp.~$d$ and $d_0$) have been switched.
\end{remark}
Using the general transformation law from equation (30) of Chapter VIII in \cite{Robins9}, assuming $g \not\equiv 0 \pmod \delta$, we have
\begin{equation} \label{key}
\eta_{g,tg}^{(s)}(\delta A\tau) = \eta_{g,tg}^{(s)}\left(A_0 \left(\frac{D\tau-b_0}{\delta/D}\right)\right)= e^{\pi i \mu_{\delta, g}^{(t)}}\eta_{g',h'}^{(s)}\left(\frac{D^2\tau-Db_0}{\delta}\right),
\end{equation}
where 
$\left(\begin{smallmatrix} g' \\ h'\end{smallmatrix}\right) = A_0^{tr} \left(\begin{smallmatrix} g \\ tg \end{smallmatrix}\right)$, i.e.
\begin{equation} \label{primed}
g' = \frac{g(\delta a + tc)}{D} \qquad \text{and} \qquad  h' = g(ab_0 + bD + td_0a),
\end{equation}
 and
 \begin{align} \label{defmu}
 \mu_{\delta,g}^{(t)} &:= \mu_{\delta,g}^{(t)}(a,b,c,d) \\
 &:= \frac{\delta a}{c} P_2\left(\frac{g}{\delta}\right) + \frac{d_0aD}{c}P_2\left(\frac{g'}{\delta}\right) - 2\sum_{\nu=0}^{\frac{c}{D}-1}
 P_1\left(\tfrac{D(\delta\nu+g)}{\delta c}\right)
  P_1\left(\tfrac{Dg'+\delta^2a\nu}{c}\right). \notag
 \end{align}

When $t=0$, this formula allows Robins to determine a congruence subgroup fixing generalized eta products of the form
\[f(\tau) = \prod_{\delta \mid N} \eta_{\delta, g}^{r_{\delta,g}}(\tau).\]
Namely, Robins finds (see equations (11) and (12) of \cite{Robins}) that $f(\tau)$ is modular on $\Gamma_1(N)$ if
\begin{equation}\label{C1}
\sum_{\substack {\delta \mid N \\ g}}\delta P_2(\tfrac{g}{\delta})r_{\delta,g}\equiv0\pmod{2}
\end{equation}
and
\begin{equation}\label{C2}
\sum_{\substack {\delta \mid N \\ g}}\frac{N}{6\delta} r_{\delta,g}\equiv0\pmod{2}.
\end{equation}
We use this criteria in the following section to prove the modularity of our generating functions.

\section{Generating functions for $p_S(n)$} \label{genfun}
We now study modular functions which are generating functions for the partition numbers $p_S(n)$. We first define these functions in Section \ref{defgen}.
Then in Section \ref{es}, we provide explicit expressions for their Fourier expansions at other cusps.

\subsection{Definition and modularity} \label{defgen}
Given a set $S$ of integers strictly between $0$ and $\frac{\delta}{2}$, we define
\[F_S(\tau) := \prod_{g \in S} \frac{\eta_{2\delta, 2g}(\tau)}{\eta_{\delta,g}(\tau)}
 = q^{\text{ord}_S} \prod_{g \in S} \prod_{\substack{\ell > 0 \\ \ell \equiv \pm g \! \! \! \pmod \delta}}(1+q^\ell)
=q^{\text{ord}_S}   \sum_{n=0}^\infty p_S(n)q^n,\]
where
\begin{equation*}
\text{ord}_S := \frac{1}{2}\sum_{g \in S} \left(2\delta P_2(\tfrac{2g}{2\delta})-\delta P_2(\tfrac{g}{\delta})\right) = \frac{1}{2}\sum_{g \in S} \delta P_2(\tfrac{g}{\delta})
\end{equation*}
is the order at infinity. We first establish the modularity properties of these functions. For any $r \in \mathbb{Q}$ in lowest terms, let $\text{Den}(r)$ be its denominator.

\begin{Lemma} \label{2.1}
Let $v= \mathrm{Den}(\mathrm{ord}_S)$. Then $F_S(v\tau)$ is a modular function on
\[\Gamma_1\left(\frac{24\delta v}{\mathrm{gcd}(|S|, 12)}\right).\]
\end{Lemma}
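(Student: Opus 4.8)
The plan is to realize $F_S(v\tau)$ as a single generalized eta-product of the shape $\prod_{\delta'\mid N}\eta_{\delta',g'}^{r_{\delta',g'}}(\tau)$ and then to verify Robins' modularity criteria \eqref{C1} and \eqref{C2} at the level $N=\tfrac{24\delta v}{\gcd(|S|,12)}$. The first step is the rescaling observation that, directly from the product formula \eqref{defetas2}, one has $\eta_{\delta,g}(v\tau)=\eta_{\delta v,gv}(\tau)$ for every positive integer $v$: replacing $q$ by $q^v$ turns the congruence condition $\ell\equiv\pm g\pmod\delta$ into $v\ell\equiv\pm gv\pmod{\delta v}$, while the leading prefactor is unchanged because $P_2(gv/(\delta v))=P_2(g/\delta)$. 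Applying this to both numerator and denominator of $F_S$ gives
\[F_S(v\tau)=\prod_{g\in S}\frac{\eta_{2\delta v,\,2gv}(\tau)}{\eta_{\delta v,\,gv}(\tau)},\]
an eta-product with exponents $r_{2\delta v,\,2gv}=+1$ and $r_{\delta v,\,gv}=-1$ for each $g\in S$. Since $g$ lies strictly between $0$ and $\tfrac\delta2$, the indices $gv$ and $2gv$ fall in the admissible ranges, and both $\delta v\mid N$ and $2\delta v\mid N$, so the product fits Robins' framework. The total weight is $\tfrac12\sum_{g\in S}(r_{2\delta v,\,2gv}+r_{\delta v,\,gv})=0$, consistent with the claim that $F_S(v\tau)$ is a modular function.

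Next I would check condition \eqref{C1}. Since $P_2(2gv/(2\delta v))=P_2(gv/(\delta v))=P_2(g/\delta)$, its left-hand side collapses termwise to
\[\sum_{g\in S}\Bigl(2\delta v\,P_2(\tfrac{g}{\delta})-\delta v\,P_2(\tfrac{g}{\delta})\Bigr)=v\sum_{g\in S}\delta P_2(\tfrac{g}{\delta})=2v\,\mathrm{ord}_S,\]
using the identity $\mathrm{ord}_S=\tfrac12\sum_{g\in S}\delta P_2(\tfrac g\delta)$ recorded in the definition of $F_S$. By definition $v=\mathrm{Den}(\mathrm{ord}_S)$, so $v\,\mathrm{ord}_S\in\mathbb{Z}$ and hence $2v\,\mathrm{ord}_S\equiv0\pmod2$. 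This is precisely where the choice of $v$ enters.

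Finally I would check \eqref{C2}. Each $g\in S$ contributes $\tfrac{N}{6\cdot2\delta v}-\tfrac{N}{6\delta v}=-\tfrac{N}{12\delta v}$, so the total sum is $-|S|\,\tfrac{N}{12\delta v}$. Substituting $N=\tfrac{24\delta v}{\gcd(|S|,12)}$ gives $\tfrac{N}{12\delta v}=\tfrac{2}{\gcd(|S|,12)}$, whence \eqref{C2} becomes $-\tfrac{2|S|}{\gcd(|S|,12)}=-2\cdot\tfrac{|S|}{\gcd(|S|,12)}$, an even integer since $\gcd(|S|,12)\mid|S|$. With both criteria verified, the result of Robins quoted above shows $F_S(v\tau)$ is modular on $\Gamma_1(N)$; meromorphy at the cusps is automatic for eta-quotients, so it is a modular \emph{function}. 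The only genuinely delicate point is the rescaling identity together with the accompanying bookkeeping: one must confirm that passing from $F_S(\tau)$ to $F_S(v\tau)$ clears exactly the denominators needed to make \eqref{C1} an even integer, which is what forces the factor $v$ into the level and pins down the precise shape of $N$.
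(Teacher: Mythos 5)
Your proposal is correct and takes essentially the same approach as the paper: rescale via $\eta_{\delta,g}(v\tau)=\eta_{\delta v,gv}(\tau)$ to write $F_S(v\tau)$ as a generalized eta-quotient, then verify Robins' criteria \eqref{C1} and \eqref{C2}, with the choice $v=\mathrm{Den}(\mathrm{ord}_S)$ making \eqref{C1} equal to the even integer $2v\,\mathrm{ord}_S$. The only cosmetic difference is that you verify evenness of \eqref{C2} directly for $N=\frac{24\delta v}{\gcd(|S|,12)}$, whereas the paper presents this $N$ as the minimal level making \eqref{C2} even with $2\delta v\mid N$; the content is identical.
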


\begin{proof}
From the definition of $F_S(\tau)$ and the fact that $\eta_{\delta, g}(v\tau) = \eta_{\delta v, gv}(\tau),$ we see that
\[F_S(v\tau) = \prod_{g \in S} \frac{\eta_{2\delta v, 2gv}(\tau)}{\eta_{\delta v,gv}(\tau)}.\]
Hence, the left-hand side of \eqref{C1} is
\begin{equation*}
\sum_{g\in S}2 \delta v P_2(\tfrac{g}{\delta}) - \sum_{g \in S} \delta v P_2(\tfrac{g}{\delta}) = \sum_{g\in S} \delta v P_2(\tfrac{g}{\delta}) = 2v \cdot \text{ord}_S,
\end{equation*}
which is even by our choice of $v = \text{Den}(\text{ord}_S)$.
Next, for $F_S(v\tau)$ the left-hand side of \eqref{C2} turns into
\begin{equation*}
\sum_{g \in S}\frac{N}{6(2 \delta v)}  -\sum_{g\in S}\frac{N}{6\delta v}
= \frac{-N}{12 \delta v}|S|.
\end{equation*}
The minimal choice of $N$ such that the above is even and $2\delta v \mid N$ is 
\[N = \frac{24\delta v}{\mathrm{gcd}(|S|, 12)}.\qedhere\]
\end{proof}

Finally, suppose we are given a set $S$ of integers between $0$ and $\frac{\delta}{2}$ with $\text{ord}_S \in \mathbb{Z}$ and a set $R$ of residues modulo $\delta$.
Then we define sieved generating functions
\begin{equation} \label{defcalF}
\mathcal{F}_{S,R}(\tau) := \sum_{r \in R} F_{S}(\tau) \vert \mathcal{S}_{\delta,r}.
\end{equation}
Lemmas \ref{2.1} and \ref{s} allow us to deduce the following.

\begin{Lemma} \label{2.3}
For any set $S$ of integers between $0$ and $\frac{\delta}{2}$ with $\mathrm{ord}_S\in \mathbb{Z}$ and any set $R$ of residues modulo $\delta$, the function $\mathcal{F}_{S,R}(\tau)$ is modular for
\[\Gamma_1\left(\lcm\left(\delta^2,\frac{24\delta}{\gcd(|S|,12)}\right)\right).\]
 \end{Lemma}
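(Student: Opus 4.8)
The plan is to combine the two lemmas already established. By Lemma~\ref{2.1}, setting $v = \mathrm{Den}(\mathrm{ord}_S)$, the function $F_S(v\tau)$ is modular on $\Gamma_1\!\left(\tfrac{24\delta v}{\gcd(|S|,12)}\right)$. But since we are assuming $\mathrm{ord}_S \in \mathbb{Z}$, we have $v = 1$, and hence $F_S(\tau)$ itself is modular on $\Gamma_1\!\left(\tfrac{24\delta}{\gcd(|S|,12)}\right)$. This is the starting point: the hypothesis $\mathrm{ord}_S \in \mathbb{Z}$ is precisely what lets us drop the scaling by $v$ and work with $F_S(\tau)$ directly, so that its Fourier expansion at infinity has integer powers of $q$ and the sieving operator $\mathcal{S}_{\delta,r}$ can be applied as defined in \eqref{defS}.

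Next I would apply Lemma~\ref{s} with $N = \tfrac{24\delta}{\gcd(|S|,12)}$ and $T = \delta$. That lemma guarantees that each sieved piece $F_S(\tau)\vert\mathcal{S}_{\delta,r}$ is modular of the same weight (here, a modular function) on $\Gamma_1\!\left(\lcm(\delta^2, N)\right) = \Gamma_1\!\left(\lcm\!\left(\delta^2, \tfrac{24\delta}{\gcd(|S|,12)}\right)\right)$. I should note that Lemma~\ref{s} requires the input to have integer powers of $q$ in its expansion at infinity, which holds because $\mathrm{ord}_S \in \mathbb{Z}$; this is the second place the integrality hypothesis is used.

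Finally, $\mathcal{F}_{S,R}(\tau)$ is defined in \eqref{defcalF} as a finite sum $\sum_{r\in R} F_S(\tau)\vert\mathcal{S}_{\delta,r}$ over the residues $r \in R$. Since each summand is a modular function on the single group $\Gamma_1\!\left(\lcm\!\left(\delta^2, \tfrac{24\delta}{\gcd(|S|,12)}\right)\right)$, and a finite $\mathbb{C}$-linear combination of modular functions on a fixed congruence subgroup is again a modular function on that subgroup (invariance under the slash operator is linear, and a sum of functions meromorphic at the cusps is meromorphic at the cusps), the total $\mathcal{F}_{S,R}(\tau)$ is modular on the same group. This completes the argument.

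There is no real obstacle here, since the lemma is essentially a bookkeeping corollary of Lemmas~\ref{2.1} and~\ref{s}; the only point requiring a moment's care is verifying that the group appearing in the statement of Lemma~\ref{s} coincides with the one claimed here, i.e.~tracking that the level $N$ from Lemma~\ref{2.1} (with $v=1$) feeds correctly into the $\lcm(T^2,N)$ of Lemma~\ref{s}. Closing that verification, together with the observation that summing over $r\in R$ preserves modularity on a common group, finishes the proof.
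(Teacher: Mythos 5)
Your proposal is correct and is exactly the argument the paper intends: the paper gives no separate proof of Lemma \ref{2.3}, stating only that it follows from Lemmas \ref{2.1} and \ref{s}, and your chain (integrality of $\mathrm{ord}_S$ forces $v=1$ in Lemma \ref{2.1}, then apply Lemma \ref{s} with $T=\delta$ and $N=\tfrac{24\delta}{\gcd(|S|,12)}$, then sum over $r\in R$) is precisely that deduction. Your added remarks on where the hypothesis $\mathrm{ord}_S\in\mathbb{Z}$ is used (both to set $v=1$ and to make the sieving operator applicable) are correct and, if anything, more careful than the paper's silent treatment.
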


  \subsection{Expansions at other cusps} \label{es}
 
Here, we use the transformation properties of generalized eta products presented in Section \ref{trans} to write down explicit expressions for the Fourier expansions of our generating functions at arbitrary cusps. In particular, our description of their principal parts leads us to the appropriate definition of  when two sets are suited for each other.

Fix a modulus $\delta$ and a set $S$ of integers strictly between $0$ and $\frac{\delta}{2}$.
For each rational number $\frac{a}{c}$ with $\gcd(a, c) = 1$, fix integers $b$ and $d$ such that $ad-bc = 1$.
As in Section \ref{trans}, let $D := \gcd(c, \delta)$
and choose integers $b_0$ and $d_0$ such that $Dd=\delta a d_0 - cb_0$. In addition, let
\[\epsilon := \epsilon(c,\delta) := \frac{\gcd(c,2\delta)}{\gcd(c,\delta)}.\]
Note that if we replace $\delta$ and $g$ by $2\delta$ and $2g$, we substitute $\epsilon D$ for $D$, then $\epsilon b_0$ for $b_0$ and $\frac{\epsilon}{2}d_0$ for $d_0$. In particular, $g'$ and $h'$ defined in \eqref{primed} scale to $\frac{4}{\epsilon}g'$ and $2\epsilon h'$.

We first find an expression for the Fourier expansion at the cusp $\frac{a}{c}$ of each term
\[F_S^{(t)}(\tau) := F_S\left(\tau + \frac{t}{\delta}\right) = \prod_{g \in S} \frac{\eta_{2\delta,2g}(\tau+\frac{2t}{2\delta})}{\eta_{\delta,g}(\tau+\frac{t}{\delta})} = \prod_{g \in S}\frac{\eta_{2g,2gt}^{(s)}(2\delta\tau)}{\eta_{g,gt}^{(s)}(\delta \tau)}\cdot e^{\pi i P_2(\frac{g}{\delta})t} \]
appearing in the sieved form $\mathcal{F}_{S,R}(\tau)$, where the last equality above follows from \eqref{td}. By \eqref{key} and \eqref{defetas}, the order of $F_S^{(t)}(\tau)$ at $\frac{a}{c}$ is
\begin{equation} \label{defordt}
\mathrm{ord}_S^{(t)}\!\left(\tfrac{a}{c}\right) = \frac{D^2}{2\delta} \sum_{g \in S}\left( \frac{\epsilon^2}{2}P_2\left(\frac{2g(\delta a + tc)}{\epsilon \delta D}\right) - P_2\left(\frac{g(\delta a + tc)}{\delta D}\right)\right).
\end{equation}
Also using \eqref{key}, we find that the first coefficient of $F_S^{(t)}(\tau)$ in the Fourier expansion at the cusp $\frac{a}{c}$ is given by
\begin{align*}
Z_S^{(t)}(\tfrac{a}{c}) &:=e\left(\frac{1}{2}\sum_{g \in S} (tP_2(\tfrac{g}{\delta})+\mu_{2\delta,2g}^{(2t)}-\mu_{\delta,g}^{(t)})- \frac{b_0\text{ord}_S^{(t)}(\frac{a}{c})}{D}
 \right)
\cdot \prod_{g \in S} \frac{\alpha_{2\delta}(\frac{4}{\epsilon}g',2\epsilon h')}{\alpha_\delta(g',h')}
 \end{align*}
 where $e(x) := e^{2\pi ix}$, $\mu_{\delta, g}^{(t)}$ is defined in \eqref{defmu}, $\alpha_\delta(g,h)$ is defined in \eqref{defalpha}, and $g'$ and $h'$ are as in \eqref{primed}.

The following function helps us to describe the other coefficients in the Fourier expansion. Let
\[C_{\delta,g}^{(t)}(\tfrac{a}{c};\ell) := \pm\frac{h'}{\delta} - \frac{Db_0}{\delta^2}\ell = \pm \frac{g(ab_0+bD+td_0a)}{\delta} -\frac{Db_0}{\delta^2}\ell,\]
where the sign is determined by the congruence $\ell \equiv \pm \alpha \left(\text{mod} \ \frac{4\delta}{\epsilon^2}\right)$ for some $0 < \alpha < \frac{\delta}{2}$ understood from context.
We note that for any $\ell$, an elementary calculation shows that
\begin{align} \label{4/e}
C^{(2t)}_{2\delta, 2g}\left(\tfrac{a}{c}; \ell \right) 
= \epsilon C_{\delta, g}^{(t)}\left(\tfrac{a}{c}; \tfrac{\epsilon}{4}\ell \right).
\end{align}

In addition, for fixed $a, c, S,\delta$ and $t$ we define a \textit{special partition of $n$} to be a collection of sets
\[\lambda = (\{\lambda_i^{(g)}\})_{g \in S}\]
of distinct positive integers satisfying
\[\lambda_i^{(g)} \equiv \pm \frac{4g'}{\epsilon^2} + (3-\epsilon)\delta 
\ \  \left(\text{mod} \ \frac{4\delta}{\epsilon^2}\right)\]
and
\[\sum_{g \in S} \sum_{i} \lambda_i^{(g)} = n.\]
We then define \textit{twisted special partition numbers}
\begin{equation}
W_S^{(t)}\!\left(\tfrac{a}{c}; n\right) := \sum_{\lambda} e\left(\sum_{g,i} \left(C^{(t)}_{\delta,g}\left(\tfrac{a}{c};\tfrac{\epsilon^2}{4}\lambda_i^{(g)}\right)+\frac{\epsilon}{2}\right)\right),
\end{equation}
where the sum ranges over all special partitions $\lambda = (\{\lambda_i^{(g)}\})_{g \in S}$ of $n$.

Note that $W_S^{(0)}(\infty;n) = p_S(n)$.

\begin{remark} 
The use of twisted or weighted partitions is not new. In fact, in \cite{A96} Alladi makes use of a similar weighting system for partitions depending on congruence properties of their parts.
\end{remark}

With these definitions, we can provide the following formula for the Fourier expansion of $F_S^{(t)}(\tau)$ at the cusp $\frac{a}{c}$.
\begin{Lemma}
Let $A = \left(\begin{smallmatrix} a & b \\ c & d \end{smallmatrix}\right)$. Then we have
\[F_S^{(t)}(A\tau) = Z_S^{(t)}\left(\tfrac{a}{c}\right) q^{\mathrm{ord}_S^{(t)}\left(\frac{a}{c}\right)}\sum_{n=0}^\infty W_S^{(t)}\left(\tfrac{a}{c};n\right)q^{\frac{\epsilon^2D^2}{4\delta^2}n}.\]
\end{Lemma}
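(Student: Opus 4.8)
The plan is to start from the factored form of $F_S^{(t)}(\tau)$ as a quotient of generalized eta-functions $\eta^{(s)}$ already recorded just above the statement, apply the transformation law \eqref{key} to each factor, and then expand the resulting twisted infinite product into a $q$-series whose coefficients are visibly the twisted partition numbers $W_S^{(t)}$. The scalar data coming out of \eqref{key} will assemble into the leading coefficient $Z_S^{(t)}$ and the order $\mathrm{ord}_S^{(t)}$, while the surviving product will produce the sum over special partitions.

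First I would write $F_S^{(t)}(A\tau)=\prod_{g\in S}e^{\pi i P_2(g/\delta)t}\,\eta_{2g,2gt}^{(s)}(2\delta A\tau)/\eta_{g,gt}^{(s)}(\delta A\tau)$ and substitute \eqref{key} into each of the $2|S|$ factors. For the denominators this directly yields $e^{\pi i \mu_{\delta,g}^{(t)}}\eta_{g',h'}^{(s)}\bigl(\tfrac{D^2\tau-Db_0}{\delta}\bigr)$ with $g',h'$ as in \eqref{primed}; for the numerators I apply the same formula after the substitutions recorded in the paragraph following the definition of $\epsilon$, namely $D\mapsto\epsilon D$, $b_0\mapsto\epsilon b_0$, $d_0\mapsto\tfrac{\epsilon}{2}d_0$, $g'\mapsto\tfrac{4}{\epsilon}g'$, $h'\mapsto 2\epsilon h'$, and $\mu_{\delta,g}^{(t)}\mapsto\mu_{2\delta,2g}^{(2t)}$. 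The key observation is that the numerator's transformed argument $\tfrac{(\epsilon D)^2\tau-\epsilon D\cdot\epsilon b_0}{2\delta}$ reduces to the common value $\tfrac{D^2\tau-Db_0}{\delta}$, so that after transformation every factor lives over the same point and the two products can be combined.

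Next I would separate three contributions from the resulting product of \eqref{defetas}-expansions. The scalar prefactors $\alpha_{2\delta}(\tfrac{4}{\epsilon}g',2\epsilon h')/\alpha_\delta(g',h')$ together with the phases $e^{\pi i(tP_2(g/\delta)+\mu_{2\delta,2g}^{(2t)}-\mu_{\delta,g}^{(t)})}$, summed over $g\in S$, are designed to collect into the claimed coefficient. Separately, evaluating the leading exponential factor of each transformed $\eta^{(s)}$ at $w=\tfrac{D^2\tau-Db_0}{\delta}$ splits into a genuine power of $q=e^{2\pi i\tau}$ and a constant phase; summing over $g$ with the numerator/denominator signs, the $\tau$-part assembles into $q^{\mathrm{ord}_S^{(t)}(a/c)}$ exactly matching \eqref{defordt}, while the $-Db_0$ part contributes precisely the remaining phase $e(-b_0\mathrm{ord}_S^{(t)}(a/c)/D)$ appearing in $Z_S^{(t)}(a/c)$. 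Confirming that these pieces combine to give exactly $Z_S^{(t)}(a/c)$ and $q^{\mathrm{ord}_S^{(t)}(a/c)}$ is then a bookkeeping matter in the definitions \eqref{defalpha}, \eqref{defmu}, and \eqref{defordt}, with the consistency check that at $A=I$ and $t=0$ everything collapses to the defining product for $p_S(n)$.

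The substantive step, and the one I expect to be the main obstacle, is the expansion of the remaining twisted infinite product, in which each $g\in S$ contributes the quotient of a numerator product over $m\equiv\pm 2g\pmod{2\delta}$ by a denominator product over $m\equiv\pm g\pmod{\delta}$, each factor of the shape $(1-(\text{root of unity})\,q^{(\text{exponent})})$. These quotients reduce the way $\prod(1-q^{2\ell})/\prod(1-q^{\ell})=\prod(1+q^{\ell})$ does, now carrying roots of unity, so expanding the product over the admissible progression produces a sum over finite subsets of allowed exponents, i.e.\ over the collections $\lambda=(\{\lambda_i^{(g)}\})_{g\in S}$ with $\lambda_i^{(g)}\equiv\pm\tfrac{4g'}{\epsilon^2}+(3-\epsilon)\delta\pmod{\tfrac{4\delta}{\epsilon^2}}$. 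The delicate points are: normalizing the exponents so that the overall power of $q$ becomes $\tfrac{\epsilon^2 D^2}{4\delta^2}n$ with $n=\sum\lambda_i^{(g)}$; tracking which sign $\pm$, hence which congruence class and the shift $(3-\epsilon)\delta$, each factor contributes; and, most importantly, checking that the accumulated root-of-unity weight on a part equals $e\bigl(C_{\delta,g}^{(t)}(\tfrac{a}{c};\tfrac{\epsilon^2}{4}\lambda_i^{(g)})+\tfrac{\epsilon}{2}\bigr)$. For this last point the identity \eqref{4/e} relating $C^{(2t)}_{2\delta,2g}$ to $C^{(t)}_{\delta,g}$ is exactly what reconciles the numerator's twist $2\epsilon h'$ with the denominator's twist $h'$, and only once each admissible subset is matched with its correct weight does summing over all of them reproduce $\sum_n W_S^{(t)}(\tfrac{a}{c};n)q^{\epsilon^2 D^2 n/4\delta^2}$, completing the proof.
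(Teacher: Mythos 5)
Your proposal is correct and follows essentially the same route as the paper's proof: apply \eqref{key} factor by factor with the $\epsilon$-scaled substitutions for the numerator terms, collect the phases, $\alpha$-factors, and leading exponentials into $Z_S^{(t)}\left(\tfrac{a}{c}\right)q^{\mathrm{ord}_S^{(t)}(a/c)}$, and use \eqref{4/e} to reduce the quotient of twisted products to a single product whose expansion yields $W_S^{(t)}$. The only presentational difference is that the paper carries out the sign/shift bookkeeping you flag via an explicit case split: for $\epsilon=2$ the telescoping $\frac{1-e(2C)q^{2k'}}{1-e(C)q^{k'}}=1+e(C)q^{k'}$ you describe, and for $\epsilon=1$ an exact cancellation of the $\ell=4k$ numerator factors against the denominator, leaving the factors with $\ell\equiv \pm 4g'+2\delta \pmod{4\delta}$, whose minus signs are absorbed into the weight $e\left(C+\tfrac{\epsilon}{2}\right)$.
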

\begin{proof}
When we apply \eqref{key} to the terms in $F_S^{(t)}(\tau)$,
we find
\begin{align*}
F_S^{(t)}(A\tau) &=   \prod_{g \in S} \frac{\eta_{2g, 2gt}^{(s)}(2\delta A\tau)}{\eta_{g,gt}^{(s)}(A\tau)}\cdot e^{\pi i P_2(\frac{g}{\delta})t}  \\
&= \prod_{g \in S}\frac{e^{\pi i \mu_{2\delta, 2g}^{(t)}}{\eta_{\frac{4}{\epsilon}g', 2\epsilon h'}^{(s)}}\left(\frac{\epsilon^2(D^2\tau-Db_0)}{2\delta}\right)}{e^{\pi i \mu_{\delta, g}^{(t)}}{\eta_{g', h'}^{(s)}}\left(\frac{D^2\tau - Db_0}{\delta}\right)} \cdot e^{\pi i P_2(\frac{g}{\delta})t}  \\
&=Z_S^{(t)}(\tfrac{a}{c})q^{\mathrm{ord}_S^{(t)}\left(\tfrac{a}{c}\right)}\prod_{g \in S} \dfrac{ \ \prod\limits_{\ell \equiv \pm \frac{4}{\epsilon} g' \! \! \! \pmod {2\delta}} \! \! \left(1-e\left(C_{2\delta, 2g}^{(2t)}\left(\frac{a}{c}; \ell \right)\right)q^{\frac{D^2 \epsilon^2}{4 \delta^2}\ell}\right)}{\! \! \! \! \prod\limits_{k \equiv \pm g' \! \! \! \pmod \delta} \! \left(1-e\left(C_{\delta, g}^{(t)}\left(\frac{a}{c}; k\right)\right)q^{\frac{D^2}{\delta^2}k}\right)}. \\
\intertext{Using \eqref{4/e} on the terms in the numerator, we can write this as}
&= Z_S^{(t)}(\tfrac{a}{c})q^{\mathrm{ord}_S^{(t)}\left(\tfrac{a}{c}\right)}\prod_{g \in S} \frac{ \ \prod\limits_{\ell \equiv \pm {\frac{4}{\epsilon}g'} \! \! \! \pmod {2\delta}} \! \!  \left(1-e\left(\epsilon C_{\delta, g}^{(t)}\left(\frac{a}{c}; {\frac{\epsilon}{4}\ell} \right)\right)q^{\frac{D^2 \epsilon^2}{4 \delta^2}\ell}\right)}{\prod\limits_{k \equiv \pm g' \! \! \! \pmod \delta}\left(1-e\left(C_{\delta, g}^{(t)}\left(\frac{a}{c}; k\right)\right)q^{\frac{D^2}{\delta^2}k}\right)}.
\end{align*}
To simplify this further we consider the cases $\epsilon=1$ and $\epsilon =2$ separately. When $\epsilon = 2$, we see
\[F_S^{(t)}(A\tau) = Z_S^{(t)}(\tfrac{a}{c})q^{\mathrm{ord}_S^{(t)}\left(\tfrac{a}{c}\right)}\prod_{g \in S} \frac{ \ \prod\limits_{\ell \equiv \pm 2g' \! \! \! \pmod {2\delta}} \!  \left(1-e\left(2C_{\delta, g}^{(t)}\left(\frac{a}{c}; {\frac{\ell}{2}} \right)\right)q^{\frac{D^2}{\delta^2}\ell}\right)}{\prod\limits_{ k \equiv \pm g' \! \! \! \pmod \delta}\left(1-e\left(C_{\delta, g}^{(t)}\left(\frac{a}{c}; k\right)\right)q^{\frac{D^2}{\delta^2}k}\right)}.\]
For each term with $k \equiv \pm g' \pmod \delta$ in the denominator, there is a term with $\ell=2k \equiv {\pm 2g'} \pmod {2\delta} $ in the numerator.  Thus, we have a product over terms
\[\frac{1-e\left(2C_{\delta, g}^{(t)}\left(\frac{a}{c}; k\right)\right)q^{\frac{D^2}{\delta^2}2k}}{1-e\left(C_{\delta, g}^{(t)}\left(\frac{a}{c}; k\right)\right)q^{\frac{D^2}{\delta^2}k}} = 1 + e\left(C_{\delta, g}^{(t)}\left(\tfrac{a}{c}; k\right)\right)q^{\frac{D^2}{\delta^2}k}.\]
Hence, when $\epsilon = 2$ we have
\[F_S^{(t)}(A\tau) = Z_S^{(t)}\left(\tfrac{a}{c}\right)q^{\mathrm{ord}_S^{(t)}\left(\tfrac{a}{c}\right)}\prod_{g \in S} \prod_{\substack{k > 0 \\ k \equiv \pm g' \! \! \! \pmod \delta}}{ \! \! \!  \! \! \left(1 + e\left(C_{\delta, g}^{(t)}\left(\tfrac{a}{c}; k \right)\right)q^{\frac{D^2}{\delta^2} k}\right)}.\]

Similarly, when $\epsilon = 1$, for each term with $k\equiv \pm g' \pmod \delta$ in the denominator, we have a term $\ell=4k \equiv {4g' \pmod {4\delta}}$ in the numerator which cancels it.  However, the terms with $\ell \equiv {4g' + 2\delta} \pmod {4\delta}$ remain.  This shows that for $\epsilon = 1$,
\[F_S^{(t)}(A\tau) = Z_S^{(t)}(\tfrac{a}{c})q^{\mathrm{ord}_S^{(t)}\left(\tfrac{a}{c}\right)}\prod_{g \in S} \! \! \prod_{\substack{\ell > 0 \\ \ell \equiv \pm {4g' + 2\delta} \pmod {4\delta}}}{ \! \! \! \! \! \! \!  \left(1 - e\left(C_{\delta, g}^{(t)}\left(\tfrac{a}{c}; \tfrac{\ell}{4} \right)\right)q^{\frac{D^2}{4 \delta^2} \ell}\right)}.\]

We can now see that in both cases, the twisted special partition numbers describe the coefficients in the products.
\end{proof}

By the above lemma, the coefficient of $q^m$ in $F_S^{(t)}(\tau)$ is given by
\begin{equation}
Y_S^{(t)}\!\left(\tfrac{a}{c};m\right) := Z_S^{(t)}(\tfrac{a}{c})W_S^{(t)}\left(\frac{a}{c};\frac{4\delta^2}{\epsilon^2D^2}(m-\text{ord}_S^{(t)}(\tfrac{a}{c}))\right)
\end{equation}
where $W_S^{(t)}(\frac{a}{c};n) := 0$ if $n \notin \mathbb{Z}$. Now suppose that $\text{ord}_S \in \mathbb{Z}$. The coefficients in the expansion of the sieved form $\mathcal{F}_{S,R}(\tau)$ at an arbitrary cusp are expressible in terms of various $Y_S^{(t)}(\frac{a}{c};m)$. Define
\begin{equation} \label{defX}
X_{S,R}\left(\tfrac{a}{c}; m\right) 
:=\frac{1}{\delta} \sum_{t=0}^{\delta-1} \left(\sum_{r \in R} \zeta_{\delta}^{\delta-tr}\right)
Y_{S}^{(t)}\!\left(\tfrac{a}{c}; m\right).
\end{equation}
Then we have the following.

\begin{Lemma} \label{4.2}
Suppose $\mathrm{ord}_S \in \mathbb{Z}$ and $R$ is a set of residues modulo $\delta$.
The Fourier expansion of $\mathcal{F}_{S,R}(\tau)$ at the cusp $\frac{a}{c}$ is given by
\[\mathcal{F}_{S,R}(A\tau) = \sum_{m \in M} X_{S,R}\left(\tfrac{a}{c};m\right) q^m,\]
where
\[M:= \left\{\mathrm{ord}_{S}^{(t)}\left(\tfrac{a}{c}\right)+\frac{\epsilon^2D^2}{4\delta^2}n :  n \in \mathbb{Z}_{\geq 0}, \  t=0, \ldots, \delta -1 \right\}.\]
\end{Lemma}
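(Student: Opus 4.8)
The proof will be pure bookkeeping built on top of the preceding lemma, so the plan is to reduce the expansion of $\mathcal{F}_{S,R}$ at the cusp $\tfrac{a}{c}$ to a finite linear combination of the already-computed expansions of the $F_S^{(t)}(A\tau)$. First I would unfold the definitions. Combining \eqref{defcalF} with the explicit form of the sieving operator \eqref{defS} at $T=\delta$, and interchanging the two finite sums, gives
\begin{equation*}
\mathcal{F}_{S,R}(\tau) = \sum_{r\in R}\frac{1}{\delta}\sum_{t=0}^{\delta-1}\zeta_\delta^{\delta-tr}F_S\!\left(\tau+\tfrac{t}{\delta}\right) = \frac{1}{\delta}\sum_{t=0}^{\delta-1}\left(\sum_{r\in R}\zeta_\delta^{\delta-tr}\right)F_S^{(t)}(\tau),
\end{equation*}
where I use $F_S^{(t)}(\tau)=F_S(\tau+\tfrac{t}{\delta})$. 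Here no commutation of the sieve with the slash operator is needed: since $\mathcal{F}_{S,R}$ is already written as a constant-coefficient combination of translates of $F_S$, I simply evaluate both sides at $A\tau$. By Lemma \ref{2.3} the function $\mathcal{F}_{S,R}$ is modular of weight $0$, so $\mathcal{F}_{S,R}(A\tau)$ is precisely its Fourier expansion at the cusp $A\infty=\tfrac{a}{c}$, and by linearity
\begin{equation*}
\mathcal{F}_{S,R}(A\tau) = \frac{1}{\delta}\sum_{t=0}^{\delta-1}\left(\sum_{r\in R}\zeta_\delta^{\delta-tr}\right)F_S^{(t)}(A\tau).
\end{equation*}

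Next I would substitute the expansion supplied by the previous lemma, namely $F_S^{(t)}(A\tau)=\sum_{m\in M_t} Y_S^{(t)}(\tfrac{a}{c};m)\,q^m$ with $M_t:=\{\mathrm{ord}_S^{(t)}(\tfrac{a}{c})+\tfrac{\epsilon^2D^2}{4\delta^2}n:n\in\mathbb{Z}_{\geq0}\}$. Interchanging the finite sum over $t$ with the sum over powers of $q$ and collecting the coefficient of each $q^m$ then yields
\begin{equation*}
\mathcal{F}_{S,R}(A\tau) = \sum_{m}\left(\frac{1}{\delta}\sum_{t=0}^{\delta-1}\Big(\sum_{r\in R}\zeta_\delta^{\delta-tr}\Big)Y_S^{(t)}(\tfrac{a}{c};m)\right)q^m,
\end{equation*}
and the bracketed coefficient is exactly $X_{S,R}(\tfrac{a}{c};m)$ by definition \eqref{defX}. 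The exponents that actually occur are those in $\bigcup_t M_t$, which is the set $M$ in the statement.

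The argument is routine, so the only step requiring genuine care—and the most likely source of a slip—is the bookkeeping of the supports, i.e.\ justifying that each inner sum may be taken over all of $M$ rather than over the individual $M_t$. To this end I would invoke the convention that $W_S^{(t)}(\tfrac{a}{c};n)=0$ whenever $n\notin\mathbb{Z}_{\geq0}$: it is non-integral in the excluded cases covered by the stated convention, and for negative integers it vanishes because there is no special partition of a negative number. This forces $Y_S^{(t)}(\tfrac{a}{c};m)=0$ for every $m\notin M_t$, so extending each inner sum over all $m\in M$ introduces only zero terms. I would then check that, for a fixed $m\in M$, the argument $\tfrac{4\delta^2}{\epsilon^2D^2}(m-\mathrm{ord}_S^{(t)}(\tfrac{a}{c}))$ lies in $\mathbb{Z}_{\geq0}$ precisely for the $t$ with $m\in M_t$, so that no spurious contributions survive and no genuine ones are dropped. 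With this verified, the displayed coefficient equals $X_{S,R}(\tfrac{a}{c};m)$ for every $m\in M$, completing the proof.
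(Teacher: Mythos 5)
Your proposal is correct and takes essentially the same route as the paper's own proof: unfold the sieve into the constant-coefficient combination of the translates $F_S^{(t)}$, evaluate at $A\tau$, substitute the expansions from the preceding lemma, and interchange the finite sums to recognize the coefficient as $X_{S,R}\left(\tfrac{a}{c};m\right)$. Your added care about supports (via the convention $W_S^{(t)}\!\left(\tfrac{a}{c};n\right)=0$ for $n \notin \mathbb{Z}_{\geq 0}$) simply spells out what the paper dismisses with ``it is clear that the non-zero terms come from $m$ in $M$.''
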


 \begin{proof}
Let $A=\left(\begin{smallmatrix} a&b\\c&d\end{smallmatrix}\right) \in \mathrm{SL}_2(\mathbb{Z})$. From the definition of $\mathcal{F}_{S,R}(\tau)$, we have 
\begin{align*}
\mathcal{F}_{S,R}(A\tau) &= \frac{1}{\delta}\sum_{r \in R}\sum_{t=0}^{\delta-1}\zeta_\delta^{\delta-tr} F_S^{(t)}(A\tau)
=\frac{1}{\delta}\sum_{r \in R}\sum_{t=0}^{\delta-1}\zeta_\delta^{\delta-tr}\left(\sum_{m}Y_S^{(t)}\!\left(\tfrac{a}{c};m\right) q^m\right) \\
&= \sum_{m} \left(\frac{1}{\delta}\sum_{r \in R}\sum_{t=0}^{\delta-1}\zeta_\delta^{\delta-tr}Y_S^{(t)}\!\left(\tfrac{a}{c};m\right)\right)q^m = \sum_{m} X_{S,R}\left(\tfrac{a}{c};m\right)q^m.
\end{align*}
It is clear that the non-zero terms come from $m$ in $M$.
\end{proof}
 
 \section{Proof of Theorem \ref{main}} \label{pf2}
 We now reformulate the condition for a partition identity $p_{S_1}(n-H) = p_{S_2}(n)$ to hold for all $n$ in some arithmetic progression in terms of equality of two sieved forms $\mathcal{F}_{S_1',R'}(\tau)$ and $\mathcal{F}_{S_2',R'}(\tau)$. To relate these to the modular functions studied in the previous sections, we need the parameter $H$ to correspond to the difference in the orders of $F_{S_1}(\tau)$ and $F_{S_2}(\tau)$ at infinity,
\begin{equation} \label{defH}
H := H(S_1, S_2) := \text{ord}_{S_1} - \text{ord}_{S_2}.
\end{equation}
Now let $v = \text{Den}(\text{ord}_{S_2})$ and set 
\begin{equation} \label{sprime}
S_1':=vS_1 = \{vs: s \in S_1\} \quad \text{and}  \quad S_2':=vS_2 = \{vs: s \in S_2\}
\end{equation}
with respect to the modulus $\delta' = v\delta$. Then we have 
\[ F_{S_1'}(\tau) = F_{S_1}(v\tau) = q^{v\cdot \text{ord}_{S_2}}\sum_{n=0}^\infty p_{S_1}(n-H)q^{vn},\]
where $p_{S_1}(m) := 0$ for $m < 0$,
and
\[F_{S_2'}(\tau) = F_{S_2}(v\tau) = q^{v \cdot \text{ord}_{S_2}}\sum_{n=0}^\infty p_{S_2}(n)q^{vn} .\]
We would like to obtain an identity of the form
\begin{align} \label{id}
q^{v\cdot \text{ord}_{S_2}} \! \! \! \! \sum_{\substack{n > 0 \\ n \equiv R \! \! \pmod \delta}} \! \! \! \! p_{S_1}(n-H)q^{vn}  =  q^{v\cdot \text{ord}_{S_2}} \! \! \! \! \sum_{\substack{n > 0 \\ n \equiv R\! \! \pmod \delta}}\! \! \! \! p_{S_2}(n)q^{vn}.
\end{align}
We recognize the above as sieved forms with respect to the set
\begin{equation} \label{defR}
R' = \{vr + v\cdot \text{ord}_{S_2} : r \in R\}
\end{equation}
and modulus $\delta'$. More precisely, \eqref{id} is equivalent to the identity
\[\mathcal{F}_{S_1',R'}(\tau) = \mathcal{F}_{S_2',R'}(\tau).\]
By Lemma \ref{2.3}, these forms are modular on
\begin{equation} \label{defG}
\Gamma_{S_1,S_2} :=  \Gamma_1\left(\lcm\left({\delta'}^2, \frac{24\delta'}{\gcd(|S_1|,|S_2|,12)}\right)\right).
\end{equation}
Two modular functions whose poles are supported at the cusps are equal if and only if the principal parts of their expansions agree at every cusp. Since our generating functions have their only poles supported at the cusps, Lemma \ref{4.2} shows that we have $\mathcal{F}_{S_1',R'}(\tau) = \mathcal{F}_{S_2',R'}(\tau)$ if and only if
\[X_{S_1',R'}\left(\tfrac{a}{c};m\right) = X_{S_2',R'}\left(\tfrac{a}{c};m\right)\]
for all $m$ in the finite set
 \begin{equation} \label{M}
 \left\{\mathrm{ord}_{S_i'}^{(t)}\left(\tfrac{a}{c}\right)+\frac{\epsilon^2D^2}{4{\delta'}^2}n < 0:  n \in \mathbb{Z}_{\geq 0}, \  t=0, \ldots, \delta'-1, \ i=1,2 \right\},
 \end{equation}
as $\frac{a}{c}$ runs over all cusp representatives of $\Gamma_{S_1,S_2}$.

 \section{Proof of Theorem \ref{true}} \label{pf1}
 
We prove Theorem \ref{true} using a different method. We first multiply our generating functions by suitable cusp forms to land in a space of holomorphic modular forms. Applying Sturm's bound then reduces the proof to a finite calculation.

 For convenience, set 
\begin{align*}
F_5(\tau) &:= F_{\{1,5,7,9\}}(4 \tau) = q\sum_{n=0}^\infty Q_5(n)q^{4n} 
= \sum_{n=0}^{\infty} a_{5}(n)q^n \\
F_{11}(\tau)&:= F_{\{1,7,9,11\}}(4 \tau) =
q^{-3}\sum_{n=0}^\infty Q_{11}(n)q^{4n}
= \sum_{n=0}^{\infty} a_{11}(n)q^n \\
\intertext{with respect to the modulus $24$, and}
\FF_7(\tau)&:= F_{\{1,7,9,11\}}(5 \tau) 
=q\sum_{n=0}^\infty \mathfrak{Q}_{7}(n)q^{5n}
= \sum_{n=0}^{\infty} \aaa_{7}(n)q^n \\
\FF_{13}(\tau)&:= F_{\{1,9,11,13\}}(5 \tau) 
= q^{-4}\sum_{n=0}^\infty \mathfrak{Q}_{13}(n)q^{5n}
= \sum_{n=0}^{\infty} \aaa_{13}(n)q^n.
\end{align*}
with respect to the modulus $30$.
To prove Theorem 1.1, we must show $a_5(n)=a_{11}(n)$ for all $n \equiv 13 \pmod {24}$ and  $\aaa_{7}(n)= \aaa_{13}(n)$ for $n \equiv 16\pmod {30}$. Our proof relies on a well-known result of Sturm, which states that two holomorphic modular forms of the same weight on the same group are equal if and only if their coefficients agree up to a certain point (see for example Theorem 3.13 of \cite{K}).

\begin{Theorem}[Sturm] \label{sturm}
Let $f(\tau)=\sum_{n=0}^\infty {a_nq^n}$ and $g(\tau)=\sum_{n=0}^\infty {b_nq^n}$ be holomorphic modular forms of weight $k$ on a finite-index subgroup $\Gamma \subseteq \mathrm{SL}_2(\mathbb{Z})$. If $a_n=b_n$ for all 
\[n\leq[\mathrm{SL}_2(\mathbb{Z}):\Gamma] \cdot \frac{k}{12}\]
then  $f(\tau)=g(\tau)$.
\end{Theorem}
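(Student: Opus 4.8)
The statement is an instance of the valence formula, and the plan is to reduce it to the full modular group by taking a norm over cosets. Set $h := f - g$; this is a holomorphic modular form of weight $k$ on $\Gamma$, and it suffices to prove $h = 0$. Write $m := [\mathrm{SL}_2(\mathbb{Z}) : \Gamma]$. Because $f$ and $g$ have Fourier expansions in integer powers of $q = e^{2\pi i \tau}$, the cusp $\infty$ has width $1$ for $\Gamma$, so the hypothesis that $a_n = b_n$ for every $n \le mk/12$ says precisely that all Fourier coefficients of $h$ up to index $mk/12$ vanish. Assuming for contradiction that $h \neq 0$, its order of vanishing at $\infty$ therefore satisfies $\mathrm{ord}_\infty(h) > mk/12$.

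Next I would fix right-coset representatives $\gamma_1 = I, \gamma_2, \dots, \gamma_m$ for $\Gamma \backslash \mathrm{SL}_2(\mathbb{Z})$ and form the norm
\[
N \ := \ \prod_{j=1}^m h \vert_k \gamma_j .
\]
Since $h$ is $\Gamma$-invariant, each factor depends only on the coset $\Gamma \gamma_j$, so $N$ is well defined; and because right multiplication by any $\sigma \in \mathrm{SL}_2(\mathbb{Z})$ permutes the cosets, a short computation with the slash operator shows $N \vert_{mk} \sigma = N$ for all $\sigma$. As $h$ is holomorphic on $\mathbb{H}$ and at every cusp of $\Gamma$, each $h \vert_k \gamma_j$ is holomorphic at $\infty$, so $N$ is a holomorphic modular form of weight $mk$ on $\mathrm{SL}_2(\mathbb{Z})$, and $N \neq 0$ because it is a product of nonzero functions.

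I would then bound $\mathrm{ord}_\infty(N)$ from both sides. The order at $\infty$ of a product is the sum of the orders of its factors; every factor is holomorphic at $\infty$ and hence contributes a nonnegative amount, while the factor $h \vert_k \gamma_1 = h$ contributes exactly $\mathrm{ord}_\infty(h)$. Thus $\mathrm{ord}_\infty(N) \ge \mathrm{ord}_\infty(h) > mk/12$. On the other hand, the valence formula on $\mathrm{SL}_2(\mathbb{Z})$ states that for a nonzero holomorphic form of weight $w$ the weighted sum of its orders of vanishing over $\mathbb{H}^* / \mathrm{SL}_2(\mathbb{Z})$ equals $w/12$; since $\mathrm{ord}_\infty$ occurs in this sum with coefficient $1$ and every other term is nonnegative, we obtain $\mathrm{ord}_\infty(N) \le mk/12$. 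These two inequalities are incompatible, so the assumption $h \neq 0$ is untenable and $f = g$.

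The part that requires care rather than ingenuity is the bookkeeping at $\infty$ and the handling of $-I$. One must confirm that the natural expansion variable of each factor $h\vert_k\gamma_j$ at $\infty$ is a fractional power of $q$ governed by the relevant cusp width, so that leading exponents add correctly; the argument above sidesteps the widths by using only the nonnegativity of the non-identity factors together with the single factor equal to $h$. The parity of $mk$ is the other subtlety: if $-I \notin \Gamma$ the index $m$ is automatically even, while if $-I \in \Gamma$ and $k$ is odd then $h\vert_k(-I) = -h$ forces $h = 0$ outright; hence the norm/valence argument is only invoked when $mk$ is even, where the weight-$mk$ valence formula and the coset count are consistent. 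An alternative that avoids the norm entirely is to apply the valence formula directly on $\Gamma$, at the cost of invoking its general (ramified) form and tracking the width of $\infty$ explicitly.
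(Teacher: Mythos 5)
Your argument is sound, but there is no proof in the paper to compare it with: the paper states Sturm's theorem as a quoted result, citing Theorem 3.13 of Kilford's book, and uses it as a black box in Section 5 to reduce the identities $F_5^*(\tau)=F_{11}^*(\tau)$ and $\mathfrak{F}_7^*(\tau)=\mathfrak{F}_{13}^*(\tau)$ to a finite coefficient check. You have supplied the standard norm-over-cosets proof, and its steps check out: the well-definedness of $N=\prod_j h|_k\gamma_j$ from the $\Gamma$-invariance of $h$, the invariance $N|_{mk}\sigma = N$ via the permutation of right cosets, the strict inequality $\mathrm{ord}_\infty(h) > mk/12$ (strict because the Fourier indices are integers), the nonnegativity of the orders of the non-identity factors, and the contradiction with the valence formula; your handling of $-I$ (the index $m$ is automatically even when $-I \notin \Gamma$, while $h|_k(-I) = -h$ forces $h=0$ when $-I \in \Gamma$ and $k$ is odd) is also correct. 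Two small remarks. First, your claim that an integer-power Fourier expansion forces the cusp $\infty$ to have width $1$ for $\Gamma$ is false as stated --- a form on $\Gamma(2)$ can have an integer $q$-expansion even though the width of $\infty$ is $2$ --- but nothing in your argument uses it: all you need is that the hypothesis is phrased in integer powers of $q$, so that $a_n = b_n$ for $n \le mk/12$ gives $\mathrm{ord}_\infty(h) > mk/12$ measured in the exponent of $q$, and that exponent contributes additively to $\mathrm{ord}_\infty(N)$. Second, note that the paper actually invokes the theorem with $k = \frac{7}{2}$, a half-integral weight, where your construction of $N$ as a weight-$mk$ form on $\mathrm{SL}_2(\mathbb{Z})$ does not go through literally (the slash composition acquires multiplier systems); the quickest repair is to observe that if $h \neq 0$ then $h^2$ is a nonzero holomorphic form of integer weight $2k$ on $\Gamma$ with $\mathrm{ord}_\infty(h^2) = 2\,\mathrm{ord}_\infty(h) > m(2k)/12$, so the integer-weight case you proved already yields the contradiction.
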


Consider the forms
\begin{align*}
\widehat{F}_5(\tau)&:= F_5(\tau)\eta(24\tau)^7 &\qquad \widehat{\FF}_7(\tau)&:= \FF_7(\tau)\eta(30\tau)^{16}\\\widehat{F}_{11}(\tau)&:= F_{11}(\tau)\eta(24\tau)^7 &\qquad \widehat{\FF}_{13}(\tau)&:= \FF_{13}(\tau)\eta(30\tau)^{16}.
\end{align*}
We first show that these forms are holomorphic.
\begin{Lemma}
The forms $\widehat{F}_5(\tau)$ and $\widehat{F}_{11}(\tau)$ are weight $\frac{7}{2}$ holomorphic modular forms on $\Gamma_1(576)$, while $\widehat{\FF}_7(\tau)$ and $\widehat{\FF}_{13}(\tau)$ are weight $8$ holomorphic modular forms on $\Gamma_1(900)$.
\end{Lemma}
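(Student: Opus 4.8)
The plan is to verify three things for each of the four forms: the weight, modularity on the claimed group, and holomorphicity (on $\mathbb{H}$ and at every cusp). The first two amount to bookkeeping, while holomorphicity at the cusps is where the real work lies.

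For the weight, recall $\eta$ has weight $\tfrac12$, so $\eta(24\tau)^7$ and $\eta(30\tau)^{16}$ have weights $\tfrac72$ and $8$; since $F_5, F_{11}$ and $\FF_7, \FF_{13}$ are modular functions (weight $0$), the products have the stated weights. For modularity I would first apply Lemma~\ref{2.1}: with $\delta=24$, $v=4$, $|S|=4$ it gives that $F_5, F_{11}$ are modular functions on $\Gamma_1(576)$, and with $\delta=30$, $v=5$, $|S|=4$ that $\FF_7, \FF_{13}$ are modular on $\Gamma_1(900)$ (here $\tfrac{24\cdot 24\cdot 4}{\gcd(4,12)}=576$ and $\tfrac{24\cdot 30\cdot 5}{\gcd(4,12)}=900$). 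In the integer-weight cases I would obtain modularity of the full product directly from Robins' criteria \eqref{C1} and \eqref{C2}: writing $\eta(30\tau)^{16}=\eta_{30,0}(\tau)^{8}$ and $\FF_7(\tau)=\prod_{g}\eta_{300,10g}(\tau)/\eta_{150,5g}(\tau)$ (using $\eta_{\delta,g}(v\tau)=\eta_{\delta v,gv}(\tau)$), the whole of $\widehat{\FF}_7$ is a generalized eta-product to which \eqref{C1} and \eqref{C2} apply with $N=900$. In the half-integral cases I would instead combine the modularity of $F_5, F_{11}$ on $\Gamma_1(576)$ with the standard fact that $\eta(24\tau)^7$ is a weight-$\tfrac72$ form on $\Gamma_1(576)$ (note $576=24^2$).

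Holomorphicity on $\mathbb{H}$ is immediate: each generalized eta-factor $\eta_{\delta,g}$ with $g\not\equiv 0$ is a product of terms $1-\zeta q^{\ell}$ that are holomorphic and nonvanishing on $\mathbb{H}$, so $F_5, F_{11}, \FF_7, \FF_{13}$ are holomorphic and nonvanishing there, and $\eta$ is likewise nonvanishing on $\mathbb{H}$. The crux is holomorphicity at the cusps, i.e.\ that at each cusp $\tfrac{a}{c}$ the vanishing order of the eta-factor is at least the pole order of the corresponding $F$. I would compute both orders from the transformation formula \eqref{key}: the order of $\eta_{\delta,g}$ at $\tfrac{a}{c}$ is $\tfrac{D^2}{2\delta}P_2\!\big(\tfrac{g'}{\delta}\big)$ with $D=\gcd(c,\delta)$ and $g'$ as in \eqref{primed} (equivalently, reading $\mathrm{ord}_S^{(0)}(\tfrac{a}{c})$ off \eqref{defordt} after rewriting $F_5=F_{\{1,5,7,9\}}(4\tau)$ as a generalized eta-quotient of level $96$), while $\eta(N\tau)^2=\eta_{N,0}$ gives order $\tfrac{\gcd(c,N)^2}{24N}$ for $\eta(N\tau)$, so $\eta(24\tau)^7$ vanishes to order $\tfrac{7\gcd(c,24)^2}{576}$ and $\eta(30\tau)^{16}$ to order $\tfrac{16\gcd(c,30)^2}{720}$.

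Since each of these orders depends on the cusp only through $D=\gcd(c,\delta)$ and, via the fractional parts $\{g'/\delta\}$, on $a$ modulo a divisor of $D$, there are only finitely many distinct cases, one for each divisor of $24$ (resp.\ $30$) together with the relevant residue of $a$. I would tabulate, for each such case, the most negative value of $\mathrm{ord}_{\frac{a}{c}}F$ and check that adding the vanishing order of the eta-factor yields a nonnegative number, confirming holomorphicity at every cusp (as a sanity check, at $\infty$ one has $D=24$, so $\eta(24\tau)^7$ contributes order $7$, which dominates the order $-3$ of $F_{11}$). The main obstacle is exactly this finite but delicate verification: the exponents $7$ and $16$ are presumably minimal, so the inequality will be tight at the worst cusp, and getting the generalized-eta order formula and its $a$-dependence exactly right—especially the bookkeeping of $\epsilon$, $D$, and $g'$ under the scaling $\tau\mapsto v\tau$—is where care is needed.
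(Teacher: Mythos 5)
Your proposal is correct and takes essentially the same route as the paper: modularity of the quotients via Lemma~\ref{2.1} (with Robins' criteria \eqref{C1}--\eqref{C2} covering the eta-powers), and holomorphicity at the cusps by computing orders from \eqref{defordt} with $t=0$ and comparing against the vanishing order of the eta-factor in a finite verification. The only difference is organizational: the paper enumerates all $1152$ cusps of $\Gamma_1(576)$ and $2240$ cusps of $\Gamma_1(900)$ by explicit computation, rather than grouping cusps by $D=\gcd(c,\delta)$ and the residue of $a$ as you suggest.
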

\begin{proof}
By Lemma \ref{2.1},  the functions $F_5(\tau)$ and $F_{11}(\tau)$ are modular on $\Gamma_1(576)$, and $\FF_7(\tau)$ and $\FF_{13}(\tau)$ are modular on $\Gamma_1(900)$. The criteria in \eqref{C1} and \eqref{C2} can also be used to show that $\eta(24\tau)$ and $\eta(30\tau)^4$ are modular on $\Gamma_1(576)$ and $\Gamma_1(900)$ respectively. All of the poles and zeros of these functions are supported at cusps.
We can use \eqref{defordt} with $t=0$ to explicitly compute the orders of $F_5(\tau), F_{11}(\tau),$ and $\eta(24\tau)^7$ at all $1152$ cusps of $\Gamma_1(576)$, and do similarly for the orders of $\FF_7(\tau)$, $\FF_{13}(\tau)$, and $\eta(30\tau)^{16}$ at all $2240$ cusps of  $\Gamma_1(900)$. Comparing orders then shows that the products defined above are holomorphic.
\end{proof}

Now define 
\begin{align*}
F^{*}_{5}(\tau) &:= \widehat{F}_5(\tau)|\mathcal{S}_{24,20} &\quad \FF^{*}_{7}(\tau) &:= \widehat{\FF}_7(\tau)|\mathcal{S}_{30,6} \\
F^{*}_{11}(\tau) &:= \widehat{F}_{11}(\tau)|\mathcal{S}_{24,20} &\quad \FF^{*}_{13}(\tau) &:= \widehat{\FF}_{13}(\tau)|\mathcal{S}_{30,6}.
\end{align*}
We claim that the respective pairs of sieved forms are equal.
\begin{Lemma}
We have $F_5^*(\tau) = F_{11}^*(\tau)$ and $\FF_7^*(\tau) = \FF_{13}^*(\tau)$.
\end{Lemma}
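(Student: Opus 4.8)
The plan is to recognize each of the four sieved forms as a \emph{holomorphic} modular form and then invoke Sturm's bound (Theorem \ref{sturm}) to collapse each claimed equality into a finite coefficient comparison. First I would pin down modularity and holomorphy. By the preceding lemma, $\widehat{F}_5,\widehat{F}_{11}$ are holomorphic of weight $\tfrac{7}{2}$ on $\Gamma_1(576)$ and $\widehat{\FF}_7,\widehat{\FF}_{13}$ are holomorphic of weight $8$ on $\Gamma_1(900)$, so Lemma \ref{s} applies with $(T,N)=(24,576)$ and $(T,N)=(30,900)$ respectively. Since $24^2=576$ and $30^2=900$, we have $\lcm(T^2,N)=N$ in both cases, and Lemma \ref{s} (which sends holomorphic modular forms to holomorphic modular forms of the same weight) shows that $F_5^*,F_{11}^*$ are holomorphic of weight $\tfrac{7}{2}$ on $\Gamma_1(576)$ and that $\FF_7^*,\FF_{13}^*$ are holomorphic of weight $8$ on $\Gamma_1(900)$.

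For the pair $\FF_7^*,\FF_{13}^*$, whose weight $8$ is already an integer, Theorem \ref{sturm} applies directly: it suffices to verify that the $q$-expansion coefficients of $\FF_7^*$ and $\FF_{13}^*$ agree for all $n\le [\mathrm{SL}_2(\mathbb{Z}):\Gamma_1(900)]\cdot\tfrac{8}{12}$. For the pair $F_5^*,F_{11}^*$ the weight $\tfrac{7}{2}$ is half-integral, so Theorem \ref{sturm} does not apply verbatim. To remedy this I would multiply through by $\eta(24\tau)$, which is a holomorphic form of weight $\tfrac{1}{2}$ on $\Gamma_1(576)$ by the criteria \eqref{C1} and \eqref{C2}. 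Then $F_5^*\,\eta(24\tau)$ and $F_{11}^*\,\eta(24\tau)$ are holomorphic of integer weight $4$ on $\Gamma_1(576)$, and because $\eta(24\tau)$ is not identically zero we have $F_5^*=F_{11}^*$ if and only if these two products coincide. Theorem \ref{sturm} then reduces the claim to checking coefficients up to $[\mathrm{SL}_2(\mathbb{Z}):\Gamma_1(576)]\cdot\tfrac{4}{12}$.

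Finally I would execute the two finite verifications. Computing the indices from $[\mathrm{SL}_2(\mathbb{Z}):\Gamma_1(N)]=N^2\prod_{p\mid N}(1-p^{-2})$ yields explicit Sturm bounds, after which one compares the required initial segments of the $q$-expansions directly. The main obstacle here is not conceptual but computational: these bounds run into the tens of thousands, so the comparison must be carried out by machine. The only genuinely delicate structural point is the half-integral-weight bookkeeping for $F_5^*$ and $F_{11}^*$, and multiplying by $\eta(24\tau)$ removes it cleanly while leaving the resulting integer-weight check within the scope of Theorem \ref{sturm}.
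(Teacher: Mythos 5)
Your proposal is correct and follows essentially the same route as the paper: Lemma \ref{s} shows the sieved forms remain holomorphic modular forms on $\Gamma_1(576)$ and $\Gamma_1(900)$ (indeed $\lcm(T^2,N)=N$ in both cases), and Theorem \ref{sturm} reduces each equality to a machine verification of initial coefficients via the index formula. Your only deviation is multiplying by $\eta(24\tau)$ to dodge the half-integral weight $\tfrac{7}{2}$; the paper applies its version of Sturm's theorem directly with $k=\tfrac{7}{2}$ (giving the bound $221184\cdot\tfrac{7}{24}=64512$), so your workaround is sound but unnecessary and produces a slightly larger bound of $221184\cdot\tfrac{4}{12}=73728$.
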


\begin{proof}
By Lemma \ref{s}, $F^{*}_{5}(\tau)$ and $F^{*}_{11}(\tau)$ are holomorphic modular forms on $\Gamma_1(576)$, and $\FF^{*}_{7}(\tau)$ and $\FF^{*}_{13}(\tau)$ are holomorphic modular forms on $\Gamma_1(900)$.  Using the formula for the index of $\Gamma_1(N)$ in $\textrm{SL}_2(\mathbb{Z})$ (see Proposition 1.7 of \cite{KO}),
$$[\textrm{SL}_2(\mathbb{Z}):\Gamma_1(N)]=N^2\prod_{\substack{p\mid N\\ p~\text{prime}}}\left(1-\frac{1}{p^2}\right),$$
we find that 
\[[\textrm{SL}_2(\mathbb{Z}):\Gamma_1(576)]=221184 \qquad \text{and} \qquad [\textrm{SL}_2(\mathbb{Z}):\Gamma_1(900)] = 518400.\]
By Theorem \ref{sturm}, it suffices to check the first 64512 coefficients of $F^{*}_{5}(\tau)$ and $F^{*}_{11}(\tau)$, and the first 345600 coefficients of $\FF^{*}_{7}(\tau)$ and  $\FF^{*}_{13}(\tau)$. We have verified that these coefficients agree with a computer.
\end{proof}

To prove the second statement in Theorem \ref{true}, we will also need the following lemma.
\begin{Lemma} \label{L}
A non-constant Laurent polynomial $P(q)$ over $\mathbb{C}$ cannot be a modular function on $\Gamma_1(N)$ for any $N$.
\end{Lemma}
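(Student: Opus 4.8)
The plan is to argue by contradiction: suppose some non-constant Laurent polynomial $P(q) = \sum_{n=-a}^{b} c_n q^n$ (with $q = e^{2\pi i \tau}$ and $c_{-a}, c_b \neq 0$) is a modular function on $\Gamma_1(N)$. The starting observation is that $P$ is automatically holomorphic on all of $\mathbb{H}$: for $\tau \in \mathbb{H}$ one has $0 < |q| < 1$, so $q$ never takes the values $0$ or $\infty$ at which a Laurent polynomial could blow up. Thus the only possible poles of $P$, viewed as a function on the modular curve $X_1(N)$, lie at the cusps. The strategy is to show that in fact $P$ is \emph{bounded} on $\mathbb{H}$; a bounded holomorphic $\Gamma_1(N)$-invariant function descends to a bounded holomorphic function on the open curve $Y_1(N) = \Gamma_1(N)\backslash\mathbb{H}$, which by Riemann's removable-singularity theorem (applied in the local cuspidal coordinate $q_h = e^{2\pi i\tau/h}$) extends holomorphically across each cusp to a holomorphic function on the compact Riemann surface $X_1(N)$, and is therefore constant --- contradicting that $P$ is non-constant.

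The only obstruction to boundedness is a possible pole of $P$ at the cusp $\infty$, i.e.\ the negative part $\sum_{n=-a}^{-1} c_n q^n$, which forces $|P(\tau)| \to \infty$ as $\mathrm{Im}(\tau) \to \infty$ when $a > 0$. I would rule this out using invariance under a single well-chosen element of $\Gamma_1(N)$ with nonzero lower-left entry, namely $\gamma = \left(\begin{smallmatrix} 1 & 0 \\ N & 1\end{smallmatrix}\right)$. On one hand, modularity gives $P(\gamma\tau) = P(\tau)$ identically on $\mathbb{H}$. On the other hand, $\gamma\tau = \tau/(N\tau + 1) \to 1/N$ as $\mathrm{Im}(\tau)\to\infty$, so $e^{2\pi i \gamma\tau} \to e^{2\pi i/N} \neq 0$ and hence $P(\gamma\tau) \to P(e^{2\pi i /N})$, a finite value. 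Comparing the two expressions shows that $\lim_{\mathrm{Im}(\tau)\to\infty} P(\tau)$ exists and is finite, which is impossible unless $a = 0$. This is the crux of the argument.

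Once $a = 0$, the function $P$ is an ordinary polynomial in $q$, and since $|q| < 1$ throughout $\mathbb{H}$ we obtain the uniform bound $|P(\tau)| \le \sum_{n=0}^{b} |c_n|$. This supplies the boundedness needed to run the removable-singularity and compactness argument above, forcing $P$ to be constant --- the desired contradiction.

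I expect the main (and essentially only nontrivial) step to be the elimination of negative powers of $q$ at $\infty$; everything else --- holomorphy on $\mathbb{H}$, the bound coming from $|q| < 1$, and the descent to $X_1(N)$ --- is routine. The one point I would take care to justify is the limit computation for $P(\gamma\tau)$: that $e^{2\pi i\gamma\tau}$ genuinely tends to the nonzero root of unity $e^{2\pi i/N}$ and that $P$, being continuous there, may be evaluated in the limit.
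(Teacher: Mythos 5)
Your proof is correct, but it takes a genuinely different route from the paper's. The paper works at the cusp $0$: setting $\tau = iy$ and letting $y \to 0^+$, so that $q \to 1$, invariance under \emph{every} $\left(\begin{smallmatrix} a & b \\ c & d \end{smallmatrix}\right) \in \Gamma_1(N)$ yields $P(1) = P\left(e^{2\pi i b/d}\right)$; since a non-constant Laurent polynomial attains the value $P(1)$ only finitely often, while the points $e^{2\pi i b/d}$ are infinite in number (e.g.\ from the matrices $\left(\begin{smallmatrix} 1 & k \\ N & 1+kN \end{smallmatrix}\right)$, a step the paper leaves implicit), the proof ends in one stroke with no machinery beyond continuity of $P$ on $\mathbb{C}^\times$. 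You instead work at the cusp $\infty$ with the single matrix $\gamma = \left(\begin{smallmatrix} 1 & 0 \\ N & 1 \end{smallmatrix}\right)$: since $\gamma\tau \to 1/N$ uniformly as $\mathrm{Im}(\tau) \to \infty$ (note $\gamma\tau = \frac{1}{N}\left(1 - \frac{1}{N\tau+1}\right)$ and $|N\tau + 1| \geq N\,\mathrm{Im}(\tau)$, so your limit computation is sound), invariance forces a finite limit of $P$ at $i\infty$, killing the principal part; then boundedness of a polynomial in $q$ on $|q|<1$ plus descent to $X_1(N)$, removable singularities at the cusps, and compactness give constancy. What each approach buys: the paper's argument is entirely elementary and self-contained, whereas your second half imports Riemann surface theory (holomorphic functions on a compact surface are constant) to handle what the paper dispatches combinatorially. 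Your first half, however, is arguably cleaner---it needs only one explicit group element---and it could be made to carry the whole proof: applying the same limit argument to the powers $\gamma^m = \left(\begin{smallmatrix} 1 & 0 \\ mN & 1 \end{smallmatrix}\right)$ gives $P\left(e^{2\pi i/(mN)}\right) = c_0$ for every $m \geq 1$, so $P(x) = c_0$ has infinitely many solutions and $P$ is constant, rejoining the paper's finitely-many-roots finish without any compactification. One small point you use implicitly and should state: constancy of $P(q)$ as a function of $\tau$ on $\mathbb{H}$ forces the Laurent polynomial itself to be constant, by the identity theorem on the punctured disk $0 < |q| < 1$.
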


\begin{proof}
Let $P(x)=a_nx^n+\cdots+a_1x+a_0+a_{-1}x^{-1}+\cdots+a_{-k}x^{-k}$ be an arbitrary Laurent polynomial, where each $a_i \in \mathbb{C}$. Assume for the sake of contradiction that $P(q)$ is a modular function on $\Gamma_1(N)$. Then we would have
$$P(q)=P\left(e^{2\pi i(\frac{a\tau+b}{c\tau+d})}\right)$$
for every $\left( \begin{smallmatrix} a&b\\ c&d \end{smallmatrix} \right) \in \Gamma_1(N)$ and every $\tau$ in the upper half-plane. Consider when $\tau=iy$ with $y>0$ real. Taking the limit as $y$ approaches $0$ in the above equation, we find that
$$P(1)=P\left(e^{\frac{2\pi i b}{d}}\right)$$
for every $\left(\begin{smallmatrix} a&b\\ c&d \end{smallmatrix} \right) \in \Gamma_1(N)$. However, the equation $P(x)=P(1)$ has only finitely many solutions, unless $P(x)$ is the constant polynomial $P(1)$. 
\end{proof}

We can now prove Ono's conjecture.
\begin{proof}[Proof of Theorem \ref{true}]
Since $\eta(24\tau)^7$ is supported completely on powers of $q$ which are congruent to $7 $ mod ${24}$, we have
\[F_g^*(\tau) =\left(F_g(\tau)\eta(24\tau)^7\right)|\mathcal{S}_{24,20} = (F_g(\tau)|\mathcal{S}_{24,13})\eta(24\tau)^7 \]
for $g=5,11$ and so
\[F_{5}(\tau) |\mathcal{S}_{24,13} = \frac{F_{5}^*(\tau)}{\eta(24\tau)^7} = \frac{F_{11}^*(\tau)}{\eta(24\tau)^7} = F_{11}(\tau) |\mathcal{S}_{24,13}.\]
This shows that $a_{5}(n) = a_{11}(n)$ for all $n \equiv 13 \pmod {24}$. An identical argument proves that $\mathfrak{a}_{7}(n) = \mathfrak{a}_{13}(n)$ for all $n \equiv 16 \pmod {30}$.

To prove the second claim, let $D(\tau):=F_5(\tau)-F_{11}(\tau)$ and $\DD(\tau):=\FF_7(\tau)-\FF_{13}(\tau)$. Suppose for the sake of contradiction there exist only finitely many $n \equiv 0,2 \pmod 6$ such that  $Q_5(n-1) \neq Q_{11}(n)$ and finitely many $m \equiv 0, 2 \pmod 6$ such that $\QQ_7(m-1) \neq  \QQ_{13}(m)$. Then after sieving on the arithmetic progressions $n \equiv 5,21 \pmod {24}$ and $m \equiv 6,26 \pmod {30}$, the resulting functions 
\[D(\tau)|\mathcal{S}_{24,5}+D(\tau)|\mathcal{S}_{24,21} \quad \text{and} \quad \DD(\tau)|\mathcal{S}_{30,6}+\DD(\tau)|\mathcal{S}_{30,26}\]
are non-constant Laurent polynomials in $q$. By Lemma \ref{s}, the above functions are modular on $\Gamma_1(576)$ and $\Gamma_1(900)$ respectively, which is impossible by Lemma \ref{L}.
\end{proof}

\begin{remark}

An anonymous referee has informed the authors of an alternative proof of Theorem 1.1, which we sketch here. Let $G_5(\tau) = q^{-1/4}F_{\{1,5,7,9\}}(\tau+\frac{1}{2})$, and let $G_{11}(\tau)=q^{3/4}F_{\{1,7,9,11\}}(\tau+\frac{1}{2})$.
We claim that
\begin{equation}
\label{altproof}G_{11}(q)+qG_5(q)= \prod_{n \equiv \pm 2, \pm 6, \pm 8, \pm10 \pmod{24}}(1-q^n).
\end{equation}
To prove this identity, we divide both sides by $G_{11}(q)$ and check that both sides are modular functions on $\Gamma_1(24)$ via \eqref{C1} and \eqref{C2}. Since the number of zeros of a modular function must equal the number of poles (counted with multiplicity), if one finds explicit bounds for the orders of poles at all cusps and shows that the difference of these functions vanishes to high enough order at infinity, the two functions must be equal. Carrying out this procedure, one needs only to check coefficients of the expansion at infinity through $q^5$. Using elementary methods, one can also see that the coefficient of $q^{6n+4}$ in the right-hand side of \eqref{altproof} always vanishes, which implies that $Q_5(n-1)=Q_{11}(n)$ for all $ n \equiv 4\pmod{6}$. 

A similar process can be used to prove $\QQ_{7}(n-1)=\QQ_{13}(n)$ for $n \equiv 4 \pmod 6$.

\end{remark}

\end{document}